
\documentclass[final,leqno]{siamltex}

\usepackage[leqno]{amsmath}
\usepackage{graphicx}
\usepackage{graphics}
\usepackage{epstopdf}
\usepackage{subfigure}
\usepackage{threeparttable}

\usepackage{bm}
\usepackage{amssymb}
\usepackage{leftidx}
\usepackage{empheq}
\usepackage{color}
\allowdisplaybreaks

\numberwithin{equation}{section}
\hoffset=-0.cm \voffset=-0.4cm
\textwidth=6.5in\textheight=8.5in

\renewcommand\arraystretch{1.5}

\title{Efficient invariant energy quadratization and scalar auxiliary variable approaches without bounded below restriction for phase field models.}
      \author{Zhengguang Liu
             \thanks{School of Mathematics and Statistics, Shandong Normal University, Jinan, China. Email: liuzhgsdu@yahoo.com}.}

\begin{document}

\maketitle

\begin{abstract}
Recently introduced invariant energy quadratization (IEQ) and scalar auxiliary variable (SAV) approaches have proven to be very powerful ways to construct energy stable schemes for phase field models. Both methods require an assumption which the energy potential or energy is bounded from below. Under this assumption, a positive constant $C$ need to be added to keep the square root reasonable. However, it seems that it maybe more reasonable in both physics and mathematics if the energy potential or energy is arbitrary. Furthermore, if the bounded value from below is not obvious exactly, we are not easy to give a proper constant $C$ before calculation. In this paper, by adding a positive preserving function ($M(\phi)$ in IEQ and $E_0(\phi)$ in SAV ) instead of the positive constant in square root, we can improve this situation. The main contribution of this article is we proved that the positive preserving function can always be found for polynomial type $F(\phi)$. We also proved the unconditional energy stability for all semi-discrete schemes carefully and rigorously. A comparative study of classical IEQ, SAV, MIEQ and MSAV approaches is considered to show the accuracy and efficiency. Finally, we present various 2D numerical simulations to demonstrate the stability and accuracy.
\end{abstract}

\begin{keywords}
Phase field, invariant energy quadratization, scalar auxiliary variable, energy stability, numerical simulations.
\end{keywords}

    \begin{AMS}
         65M12; 35K20; 35K35; 35K55; 65Z05.
    \end{AMS}

\pagestyle{myheadings}
\thispagestyle{plain}
\markboth{ZHENGGUANG LIU} {Efficient MIEQ and MSAV approaches for phase field model}
  \section{Introduction}
Many researches \cite{ambati2015review,guo2015thermodynamically,marth2016margination,miehe2010phase,shen2015efficient,wheeler1992phase,wheeler1993computation} show that phase field models are very effective methods in solidification tissue simulation and have been applied to mathematics, mechanics, materials science and other fields. For instance, the Allen-Cahn model \cite{bates2009numerical,shen2010numerical,yang2017numerical,zhai2015block}, Cahn-Hilliard model \cite{du2018stabilized,he2007large,shen2010numerical,shen2012second,shen2018scalar,weng2017fourier,yang2017numerical,zhu1999coarsening}, phase field crystal model \cite{hu2009stable,lee2019energy,lee2016simple,li2019efficient,li2017efficient,shin2016first,yang2017linearly} have been studied in numerical simulation by many scholars. In general, the phase field model can be modeled by gradient flow from the energetic variation of the energy functional $E(\phi)$:
\begin{equation*}
\frac{\partial\phi}{\partial t}=\mathcal{G}\frac{\delta E}{\delta \phi},
\end{equation*}
where $\frac{\delta E}{\delta \phi}$ is variational derivative. $\mathcal{G}$ is a non-positive operator.

Usually, the free energy functional $E(\phi)$ contains a quadratic term and a integral term of a nonlinear functional, which can be written explicitly as follows \cite{shen2017new}
\begin{equation}\label{section1_energy}
E(\phi)=(\phi,\mathcal{L}\phi)+E_1(\phi),
\end{equation}
where $\mathcal{L}$ is a symmetric non-negative linear operator and $E_1(\phi)$ is nonlinear but with only lower-order derivatives than $\mathcal{L}$.

Researchers found that there are many advantages in the phase field models from the mathematical point of view. Specially, since the phase field models are usually energy stable and well-posed, which are based on the energy variational approach, it is possible to perform effective numerical analysis and carry out reliable and accurate computer simulations. The significant goal is to preserve the energy stable property at the discrete level irrespectively of the coarseness of the discretization in time and space. Schemes with this property are extremely preferred for solving diffusive systems due to the fact that it is not only critical for the numerical scheme to capture the correct long time dynamics of the system, but also sufficient flexible for dealing with the stiffness issue. Therefore, one of essential ideas of numerical methods for phase field equations is to hold severe stability restriction on the time step. Up to now, many scholars considered many efficient numerical approaches to construct energy stable schemes for phase field models. For example, one of popular approaches is the convex splitting method which was introduced by \cite{eyre1998unconditionally,shin2016first}. Another widely used approach is the linear stabilized scheme which can be found in \cite{shen2010numerical,yang2017numerical}. Recently, the invariant energy quadratization (IEQ) and the scalar auxiliary variable (SAV) approach which were proposed by \cite{shen2018scalar,yang2016linear} have been proven to be very powerful ways to construct energy stable schemes.

The core idea of the IEQ approach is to transform the nonlinear potential into a simple quadratic form. This transformation makes the nonlinear term much easier to handle. What's more, the derivative of the quadratic polynomial is linear, one only needs to solve the linear equations with constant coefficients at each time step. This method needs to assume $F(\phi)$ is bounded from below where where $\int_\Omega F(\phi)d\textbf{x}=E_1(\phi)$. It means that there is a constant $C>0$ to satisfy $F(\phi)+C>0$. Introduce an auxiliary function as follows
\begin{equation*}
q(x,t;\phi)=\sqrt{F(\phi)+C}.
\end{equation*}

Then, a equivalent system of phase field model can be rewritten as follows
\begin{equation}\label{section1_ieq1}
  \left\{
   \begin{array}{rll}
\displaystyle\frac{\partial \phi}{\partial t}&=&\mathcal{G}\mu\\
\mu&=&\displaystyle\mathcal{L}\phi+\frac{q}{\sqrt{F(\phi)+C}}F'(\phi),\\
q_t&=&\displaystyle\frac{F'(\phi)}{2\sqrt{F(\phi)+C}}\phi_t.
   \end{array}
   \right.
\end{equation}

Taking the inner products of the above equations with $\mu$, $\phi_t$ and $2q$, respectively, It is easy to obtain that the above equivalent system satisfies a modified energy dissipation law:
\begin{equation*}
\frac{d}{dt}\left[(\phi,\mathcal{L}\phi)+\int_{\Omega}q^2d\textbf{x}\right]=(\mathcal{G}\mu,\mu)\leq0.
\end{equation*}

Let $N>0$ be a positive integer and set $$\Delta t=T/N,\quad t^n=n\Delta t,\quad \text{for}\quad n\leq N.$$ Throughout the paper, we use $C$, with or without subscript, to denote a positive constant, which could have different values at different appearances.

A second-order scheme based on the Crank-Nicolson method, reads as follows
\begin{equation}\label{section1_ieq2}
  \left\{
   \begin{array}{rll}
\displaystyle\frac{\phi^{n+1}-\phi^{n}}{\Delta t}&=&\mathcal{G}\mu^{n+1/2},\\
\mu^{n+1/2}&=&\displaystyle\mathcal{L}\left(\frac{\phi^{n+1}+\phi^n}{2}\right)+\frac{q^{n+1}+q^n}{2\sqrt{F(\tilde{\phi}^{n+1/2})+C}}F'(\tilde{\phi}^{n+1/2}),\\
\displaystyle\frac{q^{n+1}-q^n}{\Delta t}&=&\displaystyle\frac{F'(\tilde{\phi}^{n+1/2})}{2\sqrt{F(\tilde{\phi}^{n+1/2})+C}}\frac{\phi^{n+1}-\phi^{n}}{\Delta t},
   \end{array}
   \right.
\end{equation}
where $\tilde{\phi}^{n+\frac{1}{2}}$ is any explicit $O(\Delta t^2)$ approximation for $\phi(t^{n+\frac{1}{2}})$, which can be flexible according to the problem.

In IEQ approach, $F(\phi)+C>0$ may not hold for some physically interesting models. Shen et. al. \cite{shen2017new,shen2018scalar} considered a SAV approach, which inherits all advantages of IEQ approach but also overcome most of its shortcomings. Assuming that $E_1(\phi)+C>0$, then, we introduce a scalar auxiliary variable $r(t)=\sqrt{E_1(\phi)+C}$. Similarly, an equivalent system of phase field model can be rewritten as follows
\begin{equation}\label{section1_sav1}
  \left\{
   \begin{array}{rll}
\displaystyle\frac{\partial \phi}{\partial t}&=&\mathcal{G}\mu\\
\mu&=&\displaystyle\mathcal{L}\phi+\frac{r}{\sqrt{E_1(\phi)+C}}U(\phi),\\
r_t&=&\displaystyle\frac{1}{2\sqrt{E_1(\phi)+C}}\int_{\Omega}U(\phi)\phi_td\textbf{x},
   \end{array}
   \right.
\end{equation}
where $U(\phi)=\frac{\delta E_1}{\delta \phi}$.

Taking the inner products of the above equations with $\mu$, $\phi_t$ and $2r$, respectively, we also obtain that the above equivalent system satisfies a modified energy dissipation law:
\begin{equation*}
\frac{d}{dt}\left[(\phi,\mathcal{L}\phi)+r^2\right]=(\mathcal{G}\mu,\mu)\leq0.
\end{equation*}

A second-order scheme based on the Crank-Nicolson method, reads as follows
\begin{equation}\label{section1_sav2}
  \left\{
   \begin{array}{rll}
\displaystyle\frac{\phi^{n+1}-\phi^{n}}{\Delta t}&=&\mathcal{G}\mu^{n+1/2},\\
\mu^{n+1/2}&=&\displaystyle\mathcal{L}\left(\frac{\phi^{n+1}+\phi^n}{2}\right)+\frac{r^{n+1}+r^n}{2\sqrt{E_1(\tilde{\phi}^{n+1/2})+C}}U(\tilde{\phi}^{n+1/2}),\\
\displaystyle\frac{r^{n+1}-r^n}{\Delta t}&=&\displaystyle\frac{1}{2\sqrt{E_1(\tilde{\phi}^{n+1/2})+C}}\int_{\Omega}U(\tilde{\phi}^{n+1/2})\frac{\phi^{n+1}-\phi^{n}}{\Delta t}d\textbf{x},
   \end{array}
   \right.
\end{equation}

We notice that both IEQ and SAV methods require that the square root functions are bounded from below. Furthermore, to keep the square root reasonable, a positive constant $C$ need to be added. It seems that it maybe more reasonable in both physics and mathematics if the square root functions have not bounded below restriction. What's more, if we do not know the bounded value from below exactly, it is not easy to give a proper constant $C$. For example, in the last section of this paper, if $0\leq C\leq10000$ in SAV method for example 4, it will not be successful to obtain expected convergence rates. What we have to do is to give a very big parameter $C$ from the beginning. However, too big constant $C$ will influence the accuracy partly. Furthermore, we notice that if the nonlinear term is too strong, both IEQ and SAV approaches may require restrictive time steps for accuracy. It seems that if we only put either functional $F(\phi)+C$ or $E_1(\phi)+C$ in square root, the simulated environment needs to be severely limited in some cases. For example, in \cite{shen2018scalar}, the authors considered SAV approach to construct efficient and accurate time discretization schemes for Cahn-Hilliard model. a stabilization needs to be added to improve this situation. In this paper, we try to find some efficient procedure to replace $C$. We notice that if we change functional $F(\phi)$ or $E_1(\phi)$ in square root to be $\widetilde{F}(\phi)$ or $\widetilde{E}_1(\phi)$ where $\widetilde{F}(\phi)\geq0$ or $\widetilde{E}_1(\phi)\geq0$ strictly, we will not need to add a positive constant $C$ to keep the square root reasonable.
Define
\begin{equation}\label{section1_ieq_sav1}
\widetilde{F}(\phi)=F(\phi)+M(\phi),\quad \widetilde{E}_1(\phi)=E_1(\phi)+E_0(\phi)
\end{equation}
In this paper, we aim to find reasonable functional $M(\phi)$ and $E_0(\phi)$ to make $\widetilde{F}(\phi)\geq0$ or $\widetilde{E}_1(\phi)\geq0$ strictly. The main contribution of this article is that we can prove that the positive preserving function can always be found for polynomial type $F(\phi)$. In addition, we consider a general way to improve the the positivity of the coefficient matrix by adding a stabilization to make the situation better for large time steps.

The paper is organized as follows. In Sect.2, we consider modified IEQ approach for phase field model by finding reasonable functional $M(\phi)$. In Sect.3,an efficient procedure is considered to construct a modified SAV approach for phase field model by finding proper $E_0(\phi)$ . In Sect.4, general stabilized IEQ and SAV approaches for phase field model are considered to to make the situation better for large time steps. In Sect.5, various 2D numerical simulations are demonstrated to verify the accuracy and efficiency of our proposed schemes.


\section{Modified IEQ approach for phase field model}
In this section, we try to propose modified IEQ approach for phase field model by finding reasonable functional $M(\phi)$ in \eqref{section1_ieq_sav1} to make $\widetilde{F}(\phi)\geq0$. We observe that the main reason of the shortcoming of IEQ approach is that the functional $F(\phi)$ in the auxiliary variable $q(x,t;\phi)$ is not a positive preserving function in some cases. In practice, it requires that the free energy density $F(\phi)$ is bounded from below, and this may not hold for all values of $\phi$ in some cases. Adding a positive constant $C$ in $q(x,t;\phi)$ is a suitable but not a good way to improve this shortcoming. It seems that it maybe more reasonable if the scheme is not depend on $C$. What's more, if we do not know the bounded value from below exactly, it is very hard to give a proper constant $C$. It is obviously unreasonable and low efficiency to give $C$ during the calculation. Sometimes very big parameter $C$ is needed otherwise we can not simulate the phenomenon correctly. Using the definition of $\widetilde{F}(\phi)$ in \eqref{section1_ieq_sav1}, we redefine
\begin{equation*}
q(x,t;\phi)=\sqrt{\widetilde{F}(\phi)+\kappa}.
\end{equation*}
where $\kappa$ is an arbitrary sufficiently small enough non-negative constant just to ensure $\frac{1}{\sqrt{\widetilde{F}(\phi)+\kappa}}\neq\infty$ strictly. In most cases, we can choose $\kappa=0$.

We rewrite the phase field model as follows:
\begin{equation}\label{section2_ieq1}
  \left\{
   \begin{array}{rll}
\displaystyle\frac{\partial \phi}{\partial t}&=&\mathcal{G}\mu\\
\mu&=&\displaystyle\mathcal{L}\phi+\frac{q}{\sqrt{\widetilde{F}(\phi)+\kappa}}\left[F'(\phi)+M'(\phi)\right]-M'(\phi),\\
q_t&=&\displaystyle\frac{F'(\phi)+M'(\phi)}{2\sqrt{\widetilde{F}(\phi)+\kappa}}\phi_t.
   \end{array}
   \right.
\end{equation}
\subsection{Double IEQ approach for nonlinear $M'(\phi)$}
For above system, we can use double IEQ approach to discrete the equivalent formulation if the function $M'(\phi)$ is nonlinear. Define a second auxiliary function as $u(x,t;\phi)=\sqrt{M(\phi)+\kappa}$. The phase field model \eqref{section2_ieq1} can be rewritten as follows
\begin{equation}\label{section2_ieq2}
  \left\{
   \begin{array}{rll}
\displaystyle\frac{\partial \phi}{\partial t}&=&\mathcal{G}\mu\\
\mu&=&\displaystyle\mathcal{L}\phi+\frac{q}{\sqrt{\widetilde{F}(\phi)+\kappa}}\widetilde{F}'(\phi)-\frac{u}{\sqrt{M(\phi)+\kappa}}M'(\phi),\\
q_t&=&\displaystyle\frac{\widetilde{F}'(\phi)}{2\sqrt{\widetilde{F}(\phi)+\kappa}}\phi_t\\
u_t&=&\displaystyle\frac{M'(\phi)}{2\sqrt{M(\phi)+\kappa}}\phi_t,
   \end{array}
   \right.
\end{equation}
Taking the inner products of the above equations with $\mu$, $\phi_t$, $2q$ and $2u$, respectively, we obtain the following energy dissipation law:
\begin{equation*}
\frac{d}{dt}\left[(\phi,\mathcal{L}\phi)+\int_{\Omega}q^2-u^2d\textbf{x}\right]=\frac{d}{dt}\left[(\phi,\mathcal{L}\phi)+\int_{\Omega}F(\phi)d\textbf{x}\right]=\frac{dE}{dt}=(\mathcal{G}\mu,\mu)\leq0.
\end{equation*}
It means that the above energy dissipation law is totally same with the original one.

\begin{theorem}
If the nonlinear energy density function $F(\phi)$ is polynomial type such as $F(\phi)=\sum\limits_{k=1}^n a_k\phi^k$, then, a positive functional $M(\phi)$ can always be found to make $\widetilde{F}(\phi)=F(\phi)+M(\phi)>0$ for any $\phi\neq0$.
\end{theorem}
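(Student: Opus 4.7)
The plan is to give an explicit algebraic construction of $M$ out of two ingredients only: a single high-order even monomial that dominates $F$ at infinity, and a positive shift that dominates $F$ in the interior. Because $F$ is a polynomial, both pieces are easy to size, so no analysis beyond coercivity of a polynomial is required.

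First I would fix an even integer $2N > n$ and a constant $c>0$, and introduce the auxiliary polynomial
\[
P(\phi) \;:=\; F(\phi)+c\phi^{2N} \;=\; c\phi^{2N}+\sum_{k=1}^{n} a_k\phi^{k}.
\]
Since $2N>n$, the leading term of $P$ is $c\phi^{2N}$ with $c>0$, so $P$ has even degree with positive leading coefficient. Therefore $P(\phi)\to+\infty$ as $|\phi|\to\infty$; $P$ is continuous and coercive on $\mathbb{R}$ and attains a finite global minimum
\[
P_{\min} \;:=\; \inf_{\phi\in\mathbb{R}} P(\phi) \;\in\;\mathbb{R}.
\]

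Next I would choose any constant $C>\max(0,-P_{\min})$ and set
\[
M(\phi) \;:=\; C + c\phi^{2N}.
\]
Then $M(\phi)\ge C>0$ for every $\phi$, so $M$ qualifies as a (strictly) positive preserving function in the sense of \eqref{section1_ieq_sav1}, and
\[
\widetilde{F}(\phi) \;=\; F(\phi)+M(\phi) \;=\; P(\phi)+C \;\ge\; P_{\min}+C \;>\;0
\]
for every $\phi\in\mathbb{R}$. In particular $\widetilde{F}(\phi)>0$ whenever $\phi\neq 0$, which is the claim.

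The only step that is not purely tautological is the finiteness of $P_{\min}$; this is exactly what forces the choice $2N>n$ together with $c>0$, so that the tail behavior of $P$ is dictated by $c\phi^{2N}$ and absorbs every lower-order monomial of $F$. Everything else is elementary. As a byproduct, since the construction actually produces $\widetilde{F}\ge P_{\min}+C>0$ uniformly in $\phi$, the regularization parameter $\kappa$ in $q=\sqrt{\widetilde F+\kappa}$ may be taken to be $0$; the positive preserving function $M(\phi)$ has therefore completely replaced the constant $C$ used in the classical IEQ reformulation.
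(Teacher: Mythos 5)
Your proof is correct as an existence argument, but it takes a genuinely different route from the paper's. The paper proceeds constructively, monomial by monomial: each odd term $a_{2k-1}\phi^{2k-1}$ is absorbed by completing the square,
\begin{equation*}
a_{2k-1}\phi^{2k-1}=\tfrac12|a_{2k-1}|\phi^{2(k-1)}\Bigl(\phi+\tfrac{a_{2k-1}}{|a_{2k-1}|}\Bigr)^2-\tfrac12|a_{2k-1}|\phi^{2k}-\tfrac12|a_{2k-1}|\phi^{2(k-1)},
\end{equation*}
and each negative even coefficient is cancelled outright via the coefficients $b_n$ in \eqref{xin3}; the resulting $M(\phi)$ in \eqref{xin4} is read off directly from the coefficients $a_k$, has degree at most $n+1$, and requires no knowledge of any lower bound of $F$. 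Your route --- dominate $F$ by a single even monomial $c\phi^{2N}$ with $2N>n$, then shift by $C>\max(0,-P_{\min})$ --- is shorter and even yields a stronger conclusion: $\widetilde F\geq P_{\min}+C>0$ uniformly in $\phi$, including $\phi=0$, so $\kappa=0$ is always admissible, whereas the paper's $\widetilde F$ in \eqref{xin5} can vanish at nonzero points (at the zeros of the squared factors). But note two costs. First, your choice of $C$ presupposes knowing or estimating the global minimum $P_{\min}$ of a polynomial, which is precisely the practical difficulty (fixing a proper constant before computation) that motivates the theorem; to keep the construction as explicit as the paper's you should add one routine line bounding $P_{\min}$ in terms of the $a_k$, e.g.\ via Young's inequality $|a_k||\phi|^k\leq \epsilon|\phi|^{2N}+C(\epsilon,a_k)$ applied to each term with $\epsilon=c/(2n)$. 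Second, your $M$ has degree $2N>n$, strictly higher than $F$ itself, so in the double-IEQ scheme \eqref{section2_ieq3} the auxiliary nonlinearity $M'(\phi)$ is of higher order than the original $F'(\phi)$, while the paper's $M$ stays within the degree of $F$ (up to one) and, in the favorable cases of the linear-$M'$ subsection, can even be taken quadratic so that $M'(\phi)=2S\phi$ is linear. In short: as a proof of the existence statement your argument is complete and arguably cleaner; as input to the numerical method, the paper's coefficient-level decomposition is the more usable construction.
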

\begin{proof}
The energy density function $F(\phi)$ can be expressed as follows
\begin{equation}\label{xin1}
F(\phi)=\sum\limits_{k=1}^n a_k\phi^k=\sum\limits_{k=1}^{m_1} a_{2k}\phi^{2k}+\sum\limits_{k=1}^{m_2} a_{2k-1}\phi^{2k-1}.
\end{equation}
for any $a_{2k-1}\phi^{2k-1}$, we have
\begin{equation}\label{xin2}
\aligned
a_{2k-1}\phi^{2k-1}
&=a_{2k-1}\phi^{2k-1}+\frac12|a_{2k-1}|\phi^{2k}+\frac12|a_{2k-1}|\phi^{2(k-1)}-\frac12|a_{2k-1}|\phi^{2k}-\frac12|a_{2k-1}|\phi^{2(k-1)}\\
&=\frac12|a_{2k-1}|\phi^{2(k-1)}\left(\phi^2+2\frac{a_{2k-1}}{|a_{2k-1}|}\phi+1\right)-\frac12|a_{2k-1}|\phi^{2k}-\frac12|a_{2k-1}|\phi^{2(k-1)}\\
&=\frac12|a_{2k-1}|\phi^{2(k-1)}\left(\phi+\frac{a_{2k-1}}{|a_{2k-1}|}\right)^2-\frac12|a_{2k-1}|\phi^{2k}-\frac12|a_{2k-1}|\phi^{2(k-1)}.
\endaligned
\end{equation}
Define the following coefficients $b_n$:
\begin{equation}\label{xin3}
  b_n=\left\{
   \begin{array}{rr}
   0, &a_{n}=0,\\
   0, &a_{2k}>0,\quad n=2k,\\
   -a_{2k},&a_{2k}<0,\quad n=2k\\
   -a_{2k-1},&a_{2k}\neq0,\quad n=2k-1.
   \end{array}
   \right.
\end{equation}
The positive functional $M(\phi)$ can be defined as follows
\begin{equation}\label{xin4}
\aligned
M(\phi)=\sum\limits_{k=1}^{m_1} b_{2k}\phi^{2k}+\frac12\sum\limits_{k=1}^{m_2}|b_{2k-1}|\phi^{2(k-1)}(\phi^2+1).
\endaligned
\end{equation}
It is obviously that $M(\phi)>0$ for any $\phi\neq0$.

Combining the equation \eqref{xin2}-\eqref{xin4} with \eqref{xin1}, we can obtion
\begin{equation}\label{xin5}
\aligned
\widetilde{F}(\phi)=F(\phi)+M(\phi)=\sum\limits_{k=1}^{m_1} (a_{2k}+b_{2k})\phi^{2k}+\frac12\sum\limits_{k=1}^{m_2}|a_{2k-1}|\phi^{2(k-1)}\left(\phi+\frac{a_{2k-1}}{|a_{2k-1}|}\right)^2.
\endaligned
\end{equation}
For any $\phi\neq0$, we have $\widetilde{F}(\phi)>0$.
\end{proof}

A second-order scheme based on the Crank-Nicolson method for \eqref{section2_ieq2}, reads as:
\begin{equation}\label{section2_ieq3}
  \left\{
   \begin{array}{rll}
\displaystyle\frac{\phi^{n+1}-\phi^{n}}{\Delta t}&=&\mathcal{G}\mu^{n+1/2},\\
\mu^{n+1/2}&=&\displaystyle\mathcal{L}\left(\frac{\phi^{n+1}+\phi^n}{2}\right)+\frac{q^{n+1}+q^n}{2\sqrt{\widetilde{F}(\tilde{\phi}^{n+1/2})+\kappa}}\widetilde{F}'(\tilde{\phi}^{n+1/2})\\
&&\displaystyle-\frac{u^{n+1}+u^n}{2\sqrt{M(\tilde{\phi}^{n+1/2})+\kappa}}M'(\tilde{\phi}^{n+1/2}),\\
\displaystyle\frac{q^{n+1}-q^n}{\Delta t}&=&\displaystyle\frac{\widetilde{F}'(\tilde{\phi}^{n+1/2})}{2\sqrt{\widetilde{F}(\tilde{\phi}^{n+1/2})+\kappa}}\frac{\phi^{n+1}-\phi^{n}}{\Delta t},\\
\displaystyle\frac{u^{n+1}-u^n}{\Delta t}&=&\displaystyle\frac{M'(\tilde{\phi}^{n+1/2})}{2\sqrt{M(\tilde{\phi}^{n+1/2})+\kappa}}\frac{\phi^{n+1}-\phi^{n}}{\Delta t},
   \end{array}
   \right.
\end{equation}
where $\tilde{\phi}^{n+\frac{1}{2}}$ is any explicit $O(\Delta t^2)$ approximation for $\phi(t^{n+\frac{1}{2}})$, which can be flexible according to the problem. Here, we choose $\tilde{\phi}^{n+\frac{1}{2}}=(3\phi^n-\phi^{n-1})/2$ for $n>0$. For $n=0$, we compute $\tilde{\phi}^{\frac{1}{2}}$ by using the following simple scheme:
\begin{equation*}
\frac{\tilde{\phi}^{\frac{1}{2}}-\phi^0}{(\Delta t)/2}=\mathcal{G}\left(\mathcal{L}\tilde{\phi}^{\frac{1}{2}}+F'(\phi^0)\right).
\end{equation*}

\begin{theorem}\label{section2_th_MIEQ}
The multiple IEQ-CN scheme \eqref{section2_ieq3} for the phase field system is unconditionally energy stable in the sense that
\begin{equation}\label{section2_ieq4}
\aligned
E_{IEQ-CN}^{n+1}-E^{n}_{IEQ-CN}\leq\Delta t(\mathcal{G}\mu^{n+1/2},\mu^{n+1/2})\leq0,
\endaligned
\end{equation}
where the modified discrete version of the energy \eqref{section1_energy} is defined by
\begin{equation*}
\aligned
E_{IEQ-CN}^{n}=\frac12(\mathcal{L}\phi^{n},\phi^{n})+\|q^{n}\|^2-\|u^{n}\|^2.
\endaligned
\end{equation*}
\end{theorem}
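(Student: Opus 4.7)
The plan is to mimic, at the discrete level, the continuous energy identity derived just above equation \eqref{section2_ieq2} by taking inner products of the four equations of \eqref{section2_ieq3} with judiciously chosen test functions: $\Delta t\,\mu^{n+1/2}$ for the first, $\phi^{n+1}-\phi^n$ for the second, $\Delta t\,(q^{n+1}+q^n)$ for the third, and $\Delta t\,(u^{n+1}+u^n)$ for the fourth. The Crank--Nicolson midpoint structure is specifically designed so that the resulting identities telescope across time levels.

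First, taking the inner product of the first equation with $\Delta t\,\mu^{n+1/2}$ yields
\begin{equation*}
(\phi^{n+1}-\phi^n,\mu^{n+1/2}) = \Delta t\,(\mathcal{G}\mu^{n+1/2},\mu^{n+1/2}).
\end{equation*}
Next, the inner product of the second equation with $\phi^{n+1}-\phi^n$ produces three terms on the right: the quadratic term becomes $\tfrac12(\mathcal{L}\phi^{n+1},\phi^{n+1}) - \tfrac12(\mathcal{L}\phi^n,\phi^n)$ using the symmetry of $\mathcal{L}$ and the identity $2(a+b)(a-b)=a^2-b^2$; the remaining two terms are cross terms involving $q^{n+1}+q^n$ and $u^{n+1}+u^n$ paired with $\phi^{n+1}-\phi^n$.

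Multiplying the third equation through by $\Delta t\,(q^{n+1}+q^n)$ and integrating over $\Omega$ gives
\begin{equation*}
\|q^{n+1}\|^2-\|q^n\|^2 = \int_{\Omega}\frac{\widetilde{F}'(\tilde\phi^{n+1/2})(q^{n+1}+q^n)}{2\sqrt{\widetilde{F}(\tilde\phi^{n+1/2})+\kappa}}(\phi^{n+1}-\phi^n)\,d\mathbf{x},
\end{equation*}
and analogously from the fourth equation
\begin{equation*}
\|u^{n+1}\|^2-\|u^n\|^2 = \int_{\Omega}\frac{M'(\tilde\phi^{n+1/2})(u^{n+1}+u^n)}{2\sqrt{M(\tilde\phi^{n+1/2})+\kappa}}(\phi^{n+1}-\phi^n)\,d\mathbf{x}.
\end{equation*}
The right-hand sides here are exactly the two cross terms arising from the inner product of the second equation with $\phi^{n+1}-\phi^n$ (with opposite sign for the $u$-piece), so substituting back eliminates them and yields
\begin{equation*}
\tfrac12(\mathcal{L}\phi^{n+1},\phi^{n+1}) - \tfrac12(\mathcal{L}\phi^n,\phi^n) + \|q^{n+1}\|^2-\|q^n\|^2 - \|u^{n+1}\|^2+\|u^n\|^2 = \Delta t\,(\mathcal{G}\mu^{n+1/2},\mu^{n+1/2}),
\end{equation*}
which is precisely \eqref{section2_ieq4} after recognizing the left side as $E_{IEQ-CN}^{n+1}-E_{IEQ-CN}^n$ and using the non-positivity of $\mathcal{G}$.

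I do not expect a genuine obstacle here; the argument is essentially algebraic. The one delicate point to verify carefully is the exact matching of the prefactors $\widetilde{F}'(\tilde\phi^{n+1/2})/(2\sqrt{\widetilde{F}(\tilde\phi^{n+1/2})+\kappa})$ and $M'(\tilde\phi^{n+1/2})/(2\sqrt{M(\tilde\phi^{n+1/2})+\kappa})$ appearing in both the second equation and in equations three and four, since it is this consistent evaluation at the explicit approximation $\tilde\phi^{n+1/2}$ (rather than at implicit values) that allows the cross terms to cancel without introducing any CFL-type restriction on $\Delta t$.
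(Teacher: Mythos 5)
Your proof is correct and takes exactly the route the paper intends: the paper's own (one-line) proof of this theorem consists precisely of the instruction to take inner products of the four equations of \eqref{section2_ieq3} with $\Delta t\,\mu^{n+1/2}$, $\phi^{n+1}-\phi^n$, $\Delta t\,(q^{n+1}+q^n)$ and $\Delta t\,(u^{n+1}+u^n)$, which matches your choice of test functions, and your explicit cancellation of the two cross terms (possible because the prefactors are evaluated at the same explicit $\tilde\phi^{n+1/2}$ in all three places) is the omitted ``simple calculation''. One cosmetic slip only: the algebraic identity you cite should read $(a+b)(a-b)=a^2-b^2$, i.e.\ $\bigl(\tfrac{a+b}{2}\bigr)(a-b)=\tfrac12\bigl(a^2-b^2\bigr)$, rather than $2(a+b)(a-b)=a^2-b^2$; the conclusion you draw from it, $\tfrac12(\mathcal{L}\phi^{n+1},\phi^{n+1})-\tfrac12(\mathcal{L}\phi^{n},\phi^{n})$, is nevertheless correct by the symmetry of $\mathcal{L}$.
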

By taking the inner products with $\Delta t\mu^{n+1/2}$, $(\phi^{n+1}-\phi^n)$, $\Delta t(q^{n+1}+q^n)$ and $\Delta t(u^{n+1}+u^n)$ for the four equations in scheme \eqref{section2_ieq3} respectively and some simple calculations, it is not difficult to obtain the proof.

\subsection{IEQ approach for linear $M'(\phi)$}
We notice that if $M'(\phi)$ is a linear function, it will be easy to compute by IEQ approach. For some polynomial functional $F(\phi)=\sum\limits_{k=1}^{M}a_k\phi^k$, if $a_j<0$, we can always choose a positive constant $S_j=-a_j$ to make $S_j\phi^2+a_k\phi^k=-a_j\phi^2(1-\phi^{k-2})\geq0$. Thus, a reasonable formulation is $M(\phi)=S\phi^2$. By choosing proper positive constant $S$, we can always make sure $F(\phi)+S\phi^2\geq0$ for all $\phi$ in all intervals. And what's more, $M'(\phi)=2S\phi$ is linear with respect to $\phi$.

A second-order scheme based on the Crank-Nicolson method, reads as follows:
\begin{equation}\label{section2_ieq5}
  \left\{
   \begin{array}{rll}
\displaystyle\frac{\phi^{n+1}-\phi^{n}}{\Delta t}&=&\mathcal{G}\mu^{n+1/2},\\
\mu^{n+1/2}&=&\displaystyle\mathcal{L}\left(\frac{\phi^{n+1}+\phi^n}{2}\right)+\frac{q^{n+1}+q^n}{2\sqrt{\widetilde{F}(\tilde{\phi}^{n+1/2})+\kappa}}\widetilde{F}'(\tilde{\phi}^{n+1/2})-2S\tilde{\phi}^{n+1/2},\\
\displaystyle\frac{q^{n+1}-q^n}{\Delta t}&=&\displaystyle\frac{\widetilde{F}'(\tilde{\phi}^{n+1/2})}{2\sqrt{\widetilde{F}(\tilde{\phi}^{n+1/2})+\kappa}}\frac{\phi^{n+1}-\phi^{n}}{\Delta t},
   \end{array}
   \right.
\end{equation}
where $\tilde{\phi}^{n+\frac{1}{2}}$ can also be obtained as before.

\begin{theorem}\label{section2_th_MIEQ2}
The MIEQ-CN scheme \eqref{section2_ieq5} for the phase field system is unconditionally energy stable in the sense that
\begin{equation}\label{section2_ieq6}
\aligned
E_{MIEQ}^{n+1}-E^{n}_{MIEQ}\leq\Delta t(\mathcal{G}\mu^{n+1/2},\mu^{n+1/2})\leq0,
\endaligned
\end{equation}
where the modified discrete version of the energy \eqref{section1_energy} is defined by
\begin{equation*}
\aligned
E_{MIEQ}^{n}=\frac12(\mathcal{L}\phi^{n},\phi^{n})+\|q^{n}\|^2+\frac S2\|\phi^{n}-\phi^{n-1}\|^2-S\|\phi^{n}\|^2, \quad n\geq1,
\endaligned
\end{equation*}
and
\begin{equation*}
\aligned
E_{MIEQ}^{0}=\frac12(\mathcal{L}\phi^{0},\phi^{0})+\|q^{0}\|^2-S\|\phi^{0}\|^2.
\endaligned
\end{equation*}

\end{theorem}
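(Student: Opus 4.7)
The plan is to mirror the standard IEQ energy stability argument by taking the $L^2$ inner products of the three equations of \eqref{section2_ieq5} with $\Delta t\,\mu^{n+1/2}$, $-(\phi^{n+1}-\phi^n)$, and $\Delta t(q^{n+1}+q^n)$ respectively, and then summing the three resulting identities. The first inner product isolates $\Delta t(\mathcal{G}\mu^{n+1/2},\mu^{n+1/2})$. The second, using symmetry of $\mathcal{L}$ via $(\mathcal{L}(\phi^{n+1}+\phi^n),\phi^{n+1}-\phi^n)=(\mathcal{L}\phi^{n+1},\phi^{n+1})-(\mathcal{L}\phi^n,\phi^n)$, contributes the quadratic energy difference together with a cross term involving $\widetilde{F}'(\tilde\phi^{n+1/2})/\sqrt{\widetilde F(\tilde\phi^{n+1/2})+\kappa}$. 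Multiplying the third equation by $(q^{n+1}+q^n)$ pointwise and integrating yields exactly $\|q^{n+1}\|^2-\|q^n\|^2$ paired with an identical cross term that cancels the one from the second line. After this cancellation I am left with the identity
\begin{equation*}
\Delta t(\mathcal{G}\mu^{n+1/2},\mu^{n+1/2})=\tfrac12\bigl[(\mathcal{L}\phi^{n+1},\phi^{n+1})-(\mathcal{L}\phi^n,\phi^n)\bigr]+\|q^{n+1}\|^2-\|q^n\|^2-2S(\tilde\phi^{n+1/2},\phi^{n+1}-\phi^n).
\end{equation*}

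Everything then reduces to handling the stabilization cross term, which is the only place the extrapolation $\tilde\phi^{n+1/2}=(3\phi^n-\phi^{n-1})/2$ enters and which is exactly what forces the $\tfrac{S}{2}\|\phi^n-\phi^{n-1}\|^2-S\|\phi^n\|^2$ pieces to appear in $E_{MIEQ}^n$. Using the splitting $\tilde\phi^{n+1/2}=\phi^n+\tfrac12(\phi^n-\phi^{n-1})$, I would decompose
\begin{equation*}
-2S(\tilde\phi^{n+1/2},\phi^{n+1}-\phi^n)=-2S(\phi^n,\phi^{n+1}-\phi^n)-S(\phi^n-\phi^{n-1},\phi^{n+1}-\phi^n).
\end{equation*}
The first piece unfolds through the polarization identity $2(a,b-a)=\|b\|^2-\|a\|^2-\|b-a\|^2$ into $-S\|\phi^{n+1}\|^2+S\|\phi^n\|^2+S\|\phi^{n+1}-\phi^n\|^2$. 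For the mixed-difference piece I apply Young's inequality to obtain the lower bound $-\tfrac{S}{2}\|\phi^n-\phi^{n-1}\|^2-\tfrac{S}{2}\|\phi^{n+1}-\phi^n\|^2$. Adding the two bounds leaves a net contribution $-S\|\phi^{n+1}\|^2+S\|\phi^n\|^2+\tfrac{S}{2}\|\phi^{n+1}-\phi^n\|^2-\tfrac{S}{2}\|\phi^n-\phi^{n-1}\|^2$, which telescopes precisely into the $-S\|\phi\|^2$ and $\tfrac{S}{2}\|\phi^n-\phi^{n-1}\|^2$ contributions of $E_{MIEQ}$.

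Substituting this lower bound into the identity above yields directly
\begin{equation*}
E_{MIEQ}^{n+1}-E_{MIEQ}^n\le \Delta t(\mathcal{G}\mu^{n+1/2},\mu^{n+1/2}),
\end{equation*}
and the final $\le 0$ follows from the hypothesis that $\mathcal{G}$ is non-positive. The $n=0$ case is treated separately using the half-step initialization from the paragraph preceding the theorem; since only a single-step difference has been introduced, no $\|\phi^0-\phi^{-1}\|^2$ term appears, which is consistent with the distinct definition of $E_{MIEQ}^0$.

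The main obstacle is the Young-type bound on $(\phi^n-\phi^{n-1},\phi^{n+1}-\phi^n)$: it is the only inequality in the whole chain, and it is precisely what forces the stabilizing term $\tfrac{S}{2}\|\phi^n-\phi^{n-1}\|^2$ to enter the modified energy. Without absorbing this mixed difference into a telescoping form, the explicit treatment of the linear $-2S\tilde\phi^{n+1/2}$ term in \eqref{section2_ieq5} would preclude unconditional stability. The remainder of the proof is the routine testing-and-cancellation exercise familiar from the classical IEQ analysis and introduces no essentially new ingredient.
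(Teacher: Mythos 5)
Your proposal is correct and follows essentially the same route as the paper: the same three inner products, the same cancellation of the $q$--cross term, and the same telescoping treatment of the stabilization term, ending with the separate first-step argument. Your polarization-plus-Young handling of $-2S(\tilde\phi^{n+1/2},\phi^{n+1}-\phi^n)$ is algebraically identical to the paper's single identity $-(x-y,3y-z)=\frac12|x-y|^2-\frac12|y-z|^2-|x|^2+|y|^2+\frac12|x-2y+z|^2$, since your Young step discards exactly the non-negative remainder $\frac S2\|\phi^{n+1}-2\phi^{n}+\phi^{n-1}\|^2$ that the paper's identity makes explicit.
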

\begin{proof}
For $n\geq1$, by taking the inner products with $\Delta t\mu^{n+1/2}$, $(\phi^{n+1}-\phi^n)$, and $\Delta t(q^{n+1}+q^n)$ for the three equations in scheme \eqref{section2_ieq5} respectively and some simple calculations, we obtain
\begin{equation}\label{section2_ieq7}
\aligned
(\phi^{n+1}-\phi^{n},\mu^{n+1/2})=\Delta t(\mathcal{G}\mu^{n+1/2},\mu^{n+1/2})\leq0,
\endaligned
\end{equation}
\begin{equation}\label{section2_ieq8}
\aligned
(\phi^{n+1}-\phi^{n},\mu^{n+1/2})=
&\displaystyle\left(\frac{q^{n+1}+q^n}{2\sqrt{\widetilde{F}(\tilde{\phi}^{n+\frac{1}{2}})+\kappa}}\widetilde{F}'(\tilde{\phi}^{n+\frac{1}{2}}),\phi^{n+1}-\phi^{n}\right)+\frac12(\mathcal{L}\phi^{n+1},\phi^{n+1})\\
&\displaystyle-\frac12(\mathcal{L}\phi^{n},\phi^{n})-2S(\tilde{\phi}^{n+1/2},\phi^{n+1}-\phi^{n}),
\endaligned
\end{equation}
and
\begin{equation}\label{section2_ieq9}
\aligned
\|q^{n+1}\|^2-\|q^n\|^2=\displaystyle\left(\frac{q^{n+1}+q^n}{2\sqrt{\widetilde{F}(\tilde{\phi}^{n+\frac{1}{2}})+\kappa}}\widetilde{F}'(\tilde{\phi}^{n+\frac{1}{2}}),\phi^{n+1}-\phi^{n}\right).
\endaligned
\end{equation}

Applying the following identity
\begin{equation*}
\aligned
-(x-y,3y-z)=\frac12|x-y|^2-\frac12|y-z|^2-|x|^2+|y|^2+\frac12|x-2y+z|^2,
\endaligned
\end{equation*}
and let $x=\phi^{n+1}$, $y=\phi^{n}$, $z=\phi^{n-1}$ and notice that $\tilde{\phi}^{n+\frac{1}{2}}=(3\phi^n-\phi^{n-1})/2$, we obtain
\begin{equation}\label{section2_ieq10}
\aligned
-2S\left(\tilde{\phi}^{n+\frac{1}{2}},\phi^{n+1}-\phi^{n}\right)
&=-S(\phi^{n}-\phi^{n+1},3\phi^n-\phi^{n-1})\\
&=\frac S2\|\phi^{n+1}-\phi^{n}\|^2-\frac S2\|\phi^{n}-\phi^{n-1}\|^2-S\|\phi^{n+1}\|^2\\
&\quad+S\|\phi^{n}\|^2+\frac S2\|\phi^{n+1}-2\phi^{n}+\phi^{n-1}\|^2.
\endaligned
\end{equation}
Combining the equations \eqref{section2_ieq8}-\eqref{section2_ieq10} with \eqref{section2_ieq7}, we obtain that
\begin{equation*}
\aligned
&\left(\frac12(\mathcal{L}\phi^{n+1},\phi^{n+1})+\|q^{n+1}\|^2+\frac S2\|\phi^{n+1}-\phi^{n}\|^2-S\|\phi^{n+1}\|^2\right)\\
&-\left(\frac12(\mathcal{L}\phi^{n},\phi^{n})+\|q^{n}\|^2+\frac S2\|\phi^{n}-\phi^{n-1}\|^2-S\|\phi^{n}\|^2\right)\\
&=E_{MIEQ}^{n+1}-E_{MIEQ}^{n}\leq\Delta t(\mathcal{G}\mu^{n+1/2},\mu^{n+1/2})\leq0,
\endaligned
\end{equation*}
For $n=0$, by taking the inner products with $\Delta t\mu^{1/2}$, $(\phi^{1}-\phi^0)$, and $\Delta t(q^{1}+q^0)$ for the three equations in scheme \eqref{section2_ieq4} respectively and using the following equation
\begin{equation*}
\aligned
2(x-y,y)=|x|^2-|y|^2-|x-y|^2,
\endaligned
\end{equation*}
we obtain
\begin{equation*}
\aligned
&\left(\frac12(\mathcal{L}\phi^{1},\phi^{1})+\|q^{1}\|^2+\frac S2\|\phi^{1}-\phi^{0}\|^2-S\|\phi^{1}\|^2\right)-\left(\frac12(\mathcal{L}\phi^{0},\phi^{0})+\|q^{0}\|^2-S\|\phi^{0}\|^2\right)\\
&=E_{MIEQ}^{1}-E_{MIEQ}^{0}\leq\Delta t(\mathcal{G}\mu^{1/2},\mu^{1/2})\leq0,
\endaligned
\end{equation*}
which completes the proof.
\end{proof}
\section{Modified SAV approach for phase field model}
In this section, we consider modified SAV approach for phase field model by finding proper $E_0(\phi)$ in \eqref{section1_ieq_sav1} to make $\widetilde{E}(\phi)\geq0$. Research in \cite{shen2017new,shen2018scalar} shows if the nonlinear term is too strong, the SAV approach may require restrictive time steps for accuracy. Adding a positive constant $C$ in square root is a suitable but not a good way to improve this shortcoming. What's more, we find that it seems that the parameter $C$ we needed in SAV approach is bigger than the one in IEQ approach. We consider a proper procedure to replace $C$ in this section. Using the definition of $\widetilde{E}(\phi)$ in \eqref{section1_ieq_sav1}, we redefine
\begin{equation*}
r(t)=\sqrt{\widetilde{E}(\phi)+\kappa}.
\end{equation*}
where $\kappa$ is also an arbitrary sufficiently small enough non-negative constant just to ensure $\frac{1}{\sqrt{\widetilde{E}(\phi)+\kappa}}\neq\infty$ strictly.

Then, an equivalent system of phase field model can be rewritten as follows
\begin{equation}\label{section3_sav1}
  \left\{
   \begin{array}{rll}
\displaystyle\frac{\partial \phi}{\partial t}&=&\mathcal{G}\mu\\
\mu&=&\displaystyle\mathcal{L}\phi+\frac{r}{\sqrt{\widetilde{E}(\phi)+\kappa}}[U(\phi)+V(\phi)]-V(\phi),\\
r_t&=&\displaystyle\frac{1}{2\sqrt{\widetilde{E}(\phi)+\kappa}}\int_{\Omega}[U(\phi)+V(\phi)]\phi_td\textbf{x},
   \end{array}
   \right.
\end{equation}
where $V(\phi)=\frac{\delta E_0}{\delta \phi}$.

\subsection{Double SAV approach for nonlinear functional $V(\phi)$}
we consider the double scalar auxiliary variable approach to deal with above equivalent system. The double SAV approach is one of MSAV approach which was first developed in \cite{cheng2018multiple} to give numerical scheme with unconditional energy stability for the phase-field vesicle membrane model which cannot be effectively handled with SAV method. Define a new scalar auxiliary variable
\begin{equation*}
m(t)=\sqrt{E_0(\phi)+\kappa}.
\end{equation*}

For the sake of brevity, we let $\widetilde{U}(\phi)=U(\phi)+V(\phi)$. The system \eqref{section3_sav1} can be rewritten as the following equivalent formulation:
\begin{equation}\label{section3_sav2}
  \left\{
   \begin{array}{rll}
\displaystyle\frac{\partial \phi}{\partial t}&=&\mathcal{G}\mu\\
\mu&=&\displaystyle\mathcal{L}\phi+\frac{r}{\sqrt{\widetilde{E}(\phi)+\kappa}}\widetilde{U}(\phi)-\frac{m}{\sqrt{E_0(\phi)+\kappa}}V(\phi),\\
r_t&=&\displaystyle\frac{1}{2\sqrt{\widetilde{E}(\phi)+\kappa}}\int_{\Omega}\widetilde{U}\phi_td\textbf{x},\\
m_t&=&\displaystyle\frac{1}{2\sqrt{E_0(\phi)+\kappa}}\int_{\Omega}V(\phi)\phi_td\textbf{x}.
   \end{array}
   \right.
\end{equation}

Taking the inner products of the above equations with $\mu$, $\phi_t$, $2r$ and $2m$, respectively, we also obtain that the above equivalent system satisfies the following energy dissipation law:
\begin{equation*}
\frac{d}{dt}\left[(\phi,\mathcal{L}\phi)+r^2-m^2\right]=\frac{d}{dt}\left[(\phi,\mathcal{L}\phi)+\widetilde{E}(\phi)-E_0(\phi)\right]=\frac{d}{dt}\left[(\phi,\mathcal{L}\phi)+E_1(\phi)\right]=\frac{dE}{dt}=(\mathcal{G}\mu,\mu)\leq0.
\end{equation*}
It means that the above energy dissipation law is totally same with the original one.

A semi-implicit MSAV scheme based on the second order backward differentiation formula (BDF2) for \eqref{section3_sav2} reads as: for $n\geq1$,
\begin{equation}\label{section3_sav3}
\aligned
&\displaystyle\frac{3\phi^{n+1}-4\phi^n+\phi^{n-1}}{2\Delta t}=\mathcal{G}\mu^{n+1}, \\
&\mu^{n+1}= \displaystyle\mathcal{L}\phi^{n+1}+\frac{r^{n+1}}{\sqrt{\widetilde{E}(\tilde{\phi}^{n+1})+\kappa}}\widetilde{U}(\tilde{\phi}^{n+1})-\frac{m^{n+1}}{\sqrt{E_0(\tilde{\phi}^{n+1})+\kappa}}V(\tilde{\phi}^{n+1}),\\
&\displaystyle\frac{3r^{n+1}-4r^{n}+r^{n-1}}{2\Delta t} = \displaystyle\frac{1}{2\sqrt{\widetilde{E}(\tilde{\phi}^{n+1})+\kappa}}\int_{\Omega}\widetilde{U}(\tilde{\phi}^{n+1})\frac{3\phi^{n+1}-4\phi^{n}+\phi^{n-1}}{2\Delta t}d\textbf{x},\\
&\displaystyle\frac{3m^{n+1}-4m^{n}+m^{n-1}}{2\Delta t} = \displaystyle\frac{1}{2\sqrt{E_0(\tilde{\phi}^{n+1})+\kappa}}\int_{\Omega}V(\tilde{\phi}^{n+1})\frac{3\phi^{n+1}-4\phi^{n}+\phi^{n-1}}{2\Delta t}d\textbf{x},
\endaligned
\end{equation}
where $\tilde{\phi}^{n+1}$ is any explicit $O(\Delta t^2)$ approximation for $\phi(t^{n+1})$, which can be flexible according to the problem. Here, we choose $\tilde{\phi}^{n+1}=2\phi^n-\phi^{n-1}$. For $n=0$, we have
\begin{equation}\label{section3_sav4}
\aligned
&\displaystyle\frac{\phi^{1}-\phi^{0}}{\Delta t}=\mathcal{G}\mu^{1},\\
&\mu^{1}=\displaystyle\mathcal{L}\phi^{1}+\frac{r^1}{\sqrt{\widetilde{E}(\tilde{\phi}^1)+\kappa}}\widetilde{U}(\tilde{\phi}^{1})-\frac{m^1}{\sqrt{E_0(\tilde{\phi}^1)+\kappa}}V(\tilde{\phi}^{1}),\\
&\displaystyle\frac{r^{1}-r^{0}}{\Delta t}=\displaystyle\frac{1}{2\sqrt{\widetilde{E}(\tilde{\phi}^{1})+\kappa}}\int_{\Omega}\widetilde{U}(\tilde{\phi}^{1})\frac{\phi^{1}-\phi^{0}}{\Delta t}d\textbf{x},\\
&\displaystyle\frac{m^{1}-m^{0}}{\Delta t}=\displaystyle\frac{1}{2\sqrt{E_0(\tilde{\phi}^{1})+\kappa}}\int_{\Omega}V(\tilde{\phi}^{1})\frac{\phi^{1}-\phi^{0}}{\Delta t}d\textbf{x},
\endaligned
\end{equation}
where $\tilde{\phi}^{1}$ can be obtained by the following scheme:
\begin{equation*}
\frac{\tilde{\phi}^{1}-\phi^0}{(\Delta t)}=\mathcal{G}\left(\mathcal{L}\tilde{\phi}^{1}+U(\phi^0)\right).
\end{equation*}

\begin{theorem}\label{section3_th_MSAV1}
The MSAV-BDF scheme \eqref{section3_sav3} for the phase field system is unconditionally energy stable in the sense that
\begin{equation}\label{section3_sav5}
\aligned
E_{MSAV}^{n+1}-E^{n}_{MSAV}\leq2\Delta t(\mathcal{G}\mu^{n+1},\mu^{n+1})\leq0,
\endaligned
\end{equation}
where the modified discrete version of the energy \eqref{section1_energy} is defined by
\begin{equation*}
\aligned
E_{MSAV}^{n}=&\frac12\left[(\mathcal{L}\phi^n,\phi^n)+(2\mathcal{L}\phi^n-\mathcal{L}\phi^{n-1},2\phi^n-\phi^{n-1})\right]\\
&+\frac12\left[|r^n|^2+|2r^n-r^{n-1}|^2-|m^n|^2-|2m^n-m^{n-1}|^2\right]), \quad n\geq2,
\endaligned
\end{equation*}
\begin{equation*}
\aligned
E_{MSAV}^{0}=\frac12(\mathcal{L}\phi^0,\phi^0)+\frac12\|r^{0}\|^2-\frac12\|m^{0}\|^2.
\endaligned
\end{equation*}
and
\begin{equation*}
\aligned
E_{MSAV}^{1}=\frac12\left[(\mathcal{L}\phi^1,\phi^1)+|r^1|^2-|m^1|^2-|2m^1-m^{0}|^2\right]).
\endaligned
\end{equation*}
\end{theorem}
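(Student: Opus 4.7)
The plan is to mimic the standard BDF2 energy estimate used for classical SAV schemes, but with two scalar auxiliary variables where the $m$-contributions enter with reversed sign. For $n\ge 1$, I would take the inner product of the first equation of \eqref{section3_sav3} with $2\Delta t\,\mu^{n+1}$, the inner product of the second equation with $(3\phi^{n+1}-4\phi^n+\phi^{n-1})$, and multiply the third and fourth equations by $4\Delta t\,r^{n+1}$ and $4\Delta t\,m^{n+1}$ respectively. Adding the last three relations, the two cross terms
$$\Bigl(\frac{r^{n+1}}{\sqrt{\widetilde E(\tilde\phi^{n+1})+\kappa}}\widetilde U(\tilde\phi^{n+1}),3\phi^{n+1}-4\phi^n+\phi^{n-1}\Bigr)$$
and the analogous $V$-term cancel exactly against the right-hand sides of the $r$- and $m$-evolution equations; this cancellation is the whole point of the two-SAV construction and is where one must be careful with the sign, since $m$ enters $\mu$ and the energy with a minus sign but still satisfies a dissipative-looking identity.

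Next I would apply the BDF2 algebraic identity
$$2(3a-4b+c,a)=|a|^2+|2a-b|^2-|b|^2-|2b-c|^2+|a-2b+c|^2,$$
three times, with $(a,b,c)$ equal to $(\phi^{n+1},\phi^n,\phi^{n-1})$ (in the $\mathcal L$ inner product), $(r^{n+1},r^n,r^{n-1})$, and $(m^{n+1},m^n,m^{n-1})$. Summing, the $\mathcal L$ and $r$ contributions produce the desired telescoping of the $\frac12(\mathcal L\phi,\phi)+\frac12(2\mathcal L\phi^n-\mathcal L\phi^{n-1},2\phi^n-\phi^{n-1})$ and $\frac12(|r^n|^2+|2r^n-r^{n-1}|^2)$ blocks in $E_{MSAV}^n$, while the $m$ contribution produces the same telescoping with opposite sign, yielding the $-\frac12(|m^n|^2+|2m^n-m^{n-1}|^2)$ block. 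Combining with the first equation gives
$$E_{MSAV}^{n+1}-E_{MSAV}^n+\tfrac12\|\phi^{n+1}-2\phi^n+\phi^{n-1}\|_{\mathcal L}^2+\tfrac12|r^{n+1}-2r^n+r^{n-1}|^2-\tfrac12|m^{n+1}-2m^n+m^{n-1}|^2=2\Delta t(\mathcal G\mu^{n+1},\mu^{n+1}).$$
At this point the argument needs the observation that the three quadratic remainders, taken together, are non-negative; the $\mathcal L$- and $r$-remainders are manifestly non-negative, while the negative $m$-remainder is absorbed into the redefined energy (this is exactly why the modified $E_{MSAV}^n$ contains $|2m^n-m^{n-1}|^2$ with a minus sign). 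Dropping the non-negative leftover gives the stated inequality.

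The $n=0$ case is handled separately with scheme \eqref{section3_sav4}: inner-producting the $\phi$ equation with $\Delta t\mu^1$, the $\mu$ equation with $\phi^1-\phi^0$, and the $r$ and $m$ equations with $2\Delta t r^1$ and $2\Delta t m^1$, and then invoking the identity $2(x-y,x)=|x|^2-|y|^2+|x-y|^2$ for the $\mathcal L$ and $r$ pieces and $2(x-y,x)=|x|^2-|y|^2+|x-y|^2$ (with reversed sign) for the $m$ piece. The squared-difference leftovers $\|\phi^1-\phi^0\|_{\mathcal L}^2$ and $|r^1-r^0|^2$ are dropped, while the $-|m^1-m^0|^2$ leftover is rewritten using $|m^1-m^0|^2+|m^1|^2=\tfrac12|2m^1-m^0|^2+\tfrac12|m^0|^2$ to match the definition of $E_{MSAV}^1$, and the comparison with $E_{MSAV}^0$ follows.

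The main obstacle I expect is purely bookkeeping: tracking the sign of the $m$-related terms so that the negative quadratic remainders are \emph{absorbed} into $E_{MSAV}^n$ rather than left on the dissipation side, and verifying that the ad hoc definitions of $E_{MSAV}^0$ and $E_{MSAV}^1$ (which are not simply the $n\ge 2$ formula evaluated at lower indices) are precisely what makes the first two telescoping steps work. Once the algebraic identities are applied in the right order the cancellations are automatic, and no smallness assumption on $\Delta t$ is used, giving unconditional stability.
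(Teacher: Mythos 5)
Your overall strategy is exactly the route the paper's own (one-line) proof indicates: test the four equations of \eqref{section3_sav3} against $2\Delta t\,\mu^{n+1}$, $3\phi^{n+1}-4\phi^n+\phi^{n-1}$, and (after clearing the $2\Delta t$ denominators, so your $4\Delta t\,r^{n+1}$, $4\Delta t\,m^{n+1}$ coincide with the paper's $2r^{n+1}$, $2m^{n+1}$) let the two cross terms cancel, then apply the BDF2 identity; your $n=0$ bookkeeping, including the identity $|m^1-m^0|^2+|m^1|^2=\tfrac12|2m^1-m^0|^2+\tfrac12|m^0|^2$, checks out. The genuine problem is the step where you assert that the three quadratic remainders are ``taken together non-negative'' and drop them. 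After the cancellations one arrives (up to the factor-of-two normalization of the $r,m$ blocks, on which the paper itself is not consistent) at an identity of the form
\begin{equation*}
E_{MSAV}^{n+1}-E_{MSAV}^{n}+\frac12\bigl(\mathcal{L}(\phi^{n+1}-2\phi^n+\phi^{n-1}),\phi^{n+1}-2\phi^n+\phi^{n-1}\bigr)+|r^{n+1}-2r^n+r^{n-1}|^2-|m^{n+1}-2m^n+m^{n-1}|^2=2\Delta t(\mathcal{G}\mu^{n+1},\mu^{n+1}),
\end{equation*}
and the $m$-remainder enters with the \emph{wrong} sign. Nothing in the scheme controls $|m^{n+1}-2m^n+m^{n-1}|^2$ by the $\mathcal{L}$- and $r$-remainders: the fourth equation prescribes only the BDF combination $3m^{n+1}-4m^n+m^{n-1}$, not the second difference, and $\mathcal{L}$ may be degenerate. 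Your parenthetical claim that the negative remainder is ``absorbed into the redefined energy'' conflates two different objects: the block $-|2m^n-m^{n-1}|^2$ in $E_{MSAV}^n$ is already consumed by the telescoping of $-2m^{n+1}(3m^{n+1}-4m^n+m^{n-1})$ and is not available to soak up the per-step leftover.

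Moreover this defect cannot be repaired by regrouping: writing $2x(3x-4y+z)=D(x,y)-D(y,z)+e(x,y,z)$ with $D(x,y)=\alpha x^2+\beta xy+\gamma y^2$ and demanding $e\le0$ forces, from the $x^2$ and $z^2$ coefficients, $\alpha\ge 6$ and $\gamma\le 0$, while the $y^2$ coefficient requires $\alpha-\gamma\le0$ --- a contradiction. So a wrong-sign remainder is intrinsic to putting a BDF2-discretized auxiliary variable into the energy with a minus sign; your absorption trick succeeds at $n=0$ only because a single step's leftover can be pushed into the ad hoc definitions of $E_{MSAV}^0$ and $E_{MSAV}^1$, and it cannot telescope across all $n$. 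To be fair, the paper's proof consists of the same multipliers and identity followed by ``it is not difficult to obtain the proof,'' and thus silently faces the identical difficulty (the exact telescoping of Crank--Nicolson, as in Theorem 2.1, is what makes the analogous double-IEQ argument sound; BDF2 does not enjoy it). As written, though, your argument does not establish \eqref{section3_sav5} for $n\ge1$: you would need either to weaken the statement by retaining $+|m^{n+1}-2m^n+m^{n-1}|^2$ on the right-hand side, or to supply a genuine bound on the second difference of $m$, which the scheme does not provide.
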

\begin{proof}
By taking the inner products with $2\Delta t\mu^{n+1}$, $(3\phi^{n+1}-4\phi^n+\phi^{n-1})$, $2r^{n+1}$ and $2m^{n+1}$ for the four equations in scheme \eqref{section3_sav3} respectively and using the following identity
\begin{equation*}
\aligned
&(x,3x-4y+z)=\frac12(|x|^2+|2x-y|^2)-\frac12(|y|^2+|2y-z|^2)+\frac12|x-2y+z|^2,
\endaligned
\end{equation*}
it is not difficult to obtain the proof.
\end{proof}
\subsection{SAV approach for linear functional $V(\phi)$}
Next, we try to find a suitable $E_0(\phi)$. We notice that if a proper positive functional $E_0(\phi)$ makes $V(\phi)$ to be linear about $\phi$, the model \eqref{section3_sav2} will be able to be handled by SAV approach directly. Similar to the IEQ approach in above section, we can choose $$E_0(\phi)=S\int_{\Omega}\phi^2d\textbf{x}.$$
The functional $V(\phi)$ will be
$$V(\phi)=\frac{\delta E_0}{\delta \phi}=2S\phi.$$

Then, an equivalent system of phase field model \eqref{section3_sav1} can be rewritten as follows
\begin{equation}\label{section3_sav6}
  \left\{
   \begin{array}{rll}
\displaystyle\frac{\partial \phi}{\partial t}&=&\mathcal{G}\mu\\
\mu&=&\displaystyle\mathcal{L}\phi+\frac{r}{\sqrt{\widetilde{E}(\phi)+\kappa}}\widetilde{U}-2S\phi,\\
r_t&=&\displaystyle\frac{1}{2\sqrt{\widetilde{E}(\phi)+\kappa}}\int_{\Omega}\widetilde{U}\phi_td\textbf{x},
   \end{array}
   \right.
\end{equation}
where $\widetilde{E}(\phi)=E(\phi)+S\int_{\Omega}\phi^2d\textbf{x}.$

A semi-implicit second order linear SAV scheme based on BDF2 for \eqref{section3_sav6} reads as
\begin{equation}\label{section3_sav7}
  \left\{
   \begin{array}{rll}
\displaystyle\frac{3\phi^{n+1}-4\phi^{n}+\phi^{n-1}}{2\Delta t}&=&\mathcal{G}\mu^{n+1},\\
\mu^{n+1}&=&\displaystyle\mathcal{L}\phi^{n+1}+\frac{r^{n+1}}{\sqrt{\widetilde{E}(\tilde{\phi}^{n+1})+\kappa}}\widetilde{U}(\tilde{\phi}^{n+1})-2S\tilde{\phi}^{n+1},\\
\displaystyle\frac{3r^{n+1}-4r^{n}+r^{n-1}}{2\Delta t} &=& \displaystyle\frac{1}{2\sqrt{\widetilde{E}(\tilde{\phi}^{n+1})+\kappa}}\int_{\Omega}\widetilde{U}(\tilde{\phi}^{n+1})\frac{3\phi^{n+1}-4\phi^{n}+\phi^{n-1}}{2\Delta t}d\textbf{x},
   \end{array}
   \right.
  \end{equation}
where $\tilde{\phi}^{n+1}$ is also any explicit $O(\Delta t^2)$ approximation for $\phi(t^{n+1})$, which can obtained as before.

For $n=0$, we have
\begin{equation}\label{section3_sav7*}
\aligned
&\displaystyle\frac{\phi^{1}-\phi^{0}}{\Delta t}=\mathcal{G}\mu^{1},\\
&\mu^{1}=\displaystyle\mathcal{L}\phi^{1}+\frac{r^1}{\sqrt{\widetilde{E}(\tilde{\phi}^1)+\kappa}}\widetilde{U}(\tilde{\phi}^{1})-2S\phi^0,\\
&\displaystyle\frac{r^{1}-r^{0}}{\Delta t}=\displaystyle\frac{1}{2\sqrt{\widetilde{E}(\tilde{\phi}^{1})+\kappa}}\int_{\Omega}\widetilde{U}(\tilde{\phi}^{1})\frac{\phi^{1}-\phi^{0}}{\Delta t}d\textbf{x},
\endaligned
\end{equation}

\begin{theorem}\label{section3_th_MSAV2}
The SAV-BDF scheme \eqref{section3_sav7} for the phase field system is unconditionally energy stable in the sense that
\begin{equation}\label{section3_sav8}
\aligned
E_{SAV}^{n+1}-E^{n}_{SAV}\leq2\Delta t(\mathcal{G}\mu^{n+1},\mu^{n+1})\leq0,
\endaligned
\end{equation}
where the modified discrete version of the energy \eqref{section1_energy} is defined by
\begin{equation*}
\aligned
E_{SAV}^{n}=&\frac12\left[(\mathcal{L}\phi^{n},\phi^{n})+(2\mathcal{L}\phi^{n}-\mathcal{L}\phi^{n-1},\phi^{n}-\phi^{n-1})\right]+(|r^{n}|^2+|2r^{n}-r^{n-1}|^2)\\
&+S(2\|\phi^{n}-\phi^{n-1}\|^2-\|\phi^{n}\|^2-\|2\phi^{n}-\phi^{n-1}\|^2), \quad n\geq2,
\endaligned
\end{equation*}
\begin{equation*}
\aligned
E_{SAV}^{0}=\frac12(\mathcal{L}\phi^{0},\phi^{0})+\|r^{0}\|^2-S\|\phi^{0}\|^2.
\endaligned
\end{equation*}
and
\begin{equation*}
\aligned
E_{SAV}^{1}=\frac12(\mathcal{L}\phi^{1},\phi^{1})+\|r^{1}\|^2+S\|\phi^{1}-\phi^{0}\|^2-S\|\phi^{1}\|^2.
\endaligned
\end{equation*}
\end{theorem}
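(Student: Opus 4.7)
The plan is to adapt the proof of Theorem \ref{section3_th_MSAV1} to the present single-SAV setting and to deal carefully with the new explicit stabilization term $-2S\tilde{\phi}^{n+1}$. Specifically, I would take inner products of the three equations in scheme \eqref{section3_sav7} with $2\Delta t\,\mu^{n+1}$, with $(3\phi^{n+1}-4\phi^n+\phi^{n-1})$, and with $4\Delta t\,r^{n+1}$, respectively. Substituting the expression for $\mu^{n+1}$ from the second equation into the combination of the first two produces three contributions: a term involving $(\mathcal{L}\phi^{n+1},\,3\phi^{n+1}-4\phi^n+\phi^{n-1})$, the SAV nonlinearity weighted by $r^{n+1}/\sqrt{\widetilde{E}(\tilde{\phi}^{n+1})+\kappa}$, and the explicit stabilization $-2S(\tilde{\phi}^{n+1},\,3\phi^{n+1}-4\phi^n+\phi^{n-1})$. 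Multiplying the $r$-equation by $4\Delta t\,r^{n+1}$ produces exactly the opposite of the SAV nonlinearity, so those two cancel; this cancellation is the defining feature of SAV and requires no further work.

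For the $\mathcal{L}$- and $r$-contributions I would use the BDF2 identity cited in the proof of Theorem \ref{section3_th_MSAV1},
\begin{equation*}
(x,\,3x-4y+z)=\tfrac12(|x|^2+|2x-y|^2)-\tfrac12(|y|^2+|2y-z|^2)+\tfrac12|x-2y+z|^2,
\end{equation*}
applied once in the inner product induced by $\mathcal{L}$ (using that $\mathcal{L}$ is symmetric and non-negative) and once scalar-wise with $x=r^{n+1},\,y=r^n,\,z=r^{n-1}$. This yields the $\mathcal{L}$- and $r$-parts of $E^{n+1}_{SAV}-E^n_{SAV}$ together with two non-negative numerical dissipation terms of the form $\frac12(\mathcal{L}(\phi^{n+1}-2\phi^n+\phi^{n-1}),\,\phi^{n+1}-2\phi^n+\phi^{n-1})$ and $\frac12|r^{n+1}-2r^n+r^{n-1}|^2$, both of which can be discarded on the left.

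The main obstacle is the explicit term $-2S(2\phi^n-\phi^{n-1},\,3\phi^{n+1}-4\phi^n+\phi^{n-1})$, since $\tilde{\phi}^{n+1}=2\phi^n-\phi^{n-1}$ is extrapolated and does not immediately fit the BDF2 identity. I would prove the algebraic identity
\begin{equation*}
-2S(2b-c,\,3a-4b+c)=S\bigl[(2\|a-b\|^2-\|a\|^2-\|2a-b\|^2)-(2\|b-c\|^2-\|b\|^2-\|2b-c\|^2)\bigr]+3S\|a-2b+c\|^2,
\end{equation*}
with $a=\phi^{n+1},\,b=\phi^n,\,c=\phi^{n-1}$, by direct expansion of both sides. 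This furnishes precisely the $S$-part of $E^{n+1}_{SAV}-E^n_{SAV}$ plus an extra nonnegative dissipation $3S\|\phi^{n+1}-2\phi^n+\phi^{n-1}\|^2$ that can again be dropped from the left side. Assembling everything then gives $E^{n+1}_{SAV}-E^n_{SAV}\le 2\Delta t(\mathcal{G}\mu^{n+1},\mu^{n+1})\le 0$ for $n\ge 2$.

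For the initial steps I would run the same argument on the first-order scheme \eqref{section3_sav7*}: take inner products with $\Delta t\,\mu^1$, $(\phi^1-\phi^0)$, and $2\Delta t\,r^1$, rely again on SAV cancellation of the nonlinear term, and use the standard identity $2(x-y,y)=\|x\|^2-\|y\|^2-\|x-y\|^2$ (applied to $\mathcal{L}$, to the scalar $r$, and to the elementary expansion of $-2S(\phi^0,\phi^1-\phi^0)$) to derive the stated form of $E^1_{SAV}-E^0_{SAV}\le \Delta t(\mathcal{G}\mu^1,\mu^1)\le 0$. The $n=1$ transition case is covered by the same computations as $n\ge 2$ but with the initial-step energy $E^1_{SAV}$ replacing the BDF2 energy at that level.
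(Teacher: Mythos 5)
Your proposal is correct and follows essentially the same route as the paper's proof: the same test functions ($2\Delta t\,\mu^{n+1}$, $3\phi^{n+1}-4\phi^n+\phi^{n-1}$, and $4\Delta t\,r^{n+1}$, the latter equivalent to the paper's $2r^{n+1}$ after clearing the $2\Delta t$ denominators), the same BDF2 identity for the $\mathcal{L}$- and $r$-parts, and your stabilization identity is exactly the negated form of the paper's identity $2(3x-4y+z,2y-z)=(|x|^2+|2x-y|^2-2|x-y|^2)-(|y|^2+|2y-z|^2-2|y-z|^2)-3|x-2y+z|^2$, which indeed yields \eqref{section3_sav11}, and your $n=0$ treatment matches the paper's. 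The only blemishes are a harmless constant-factor slip on the discarded $r$-dissipation term and the hand-waved $n=1$ transition between the two energy forms, a point the paper itself also glosses over.
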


\begin{proof}
For $n\geq1$, By taking the $L^2$ inner product with $2\Delta t\mu^{n+1}$ of the first equation in \eqref{section3_sav7}, we obtain
\begin{equation}\label{section3_sav9}
\aligned
(3\phi^{n+1}-4\phi^n+\phi^{n-1},\mu^{n+1})=2\Delta t(\mathcal{G}\mu^{n+1},\mu^{n+1})\leq0,
\endaligned
\end{equation}
Next, for simplicity, we first define $b^{n+1}=\frac{\widetilde{U}(\tilde{\phi}^{n+1})}{\sqrt{\widetilde{E}(\tilde{\phi}^{n+1})+\kappa}}$. By taking the $L^2$ inner product of the second equation in \eqref{section3_sav7} with $3\phi^{n+1}-4\phi^n+\phi^{n-1}$, we obtain
\begin{equation}\label{section3_sav10}
\aligned
(3\phi^{n+1}-4\phi^n+\phi^{n-1},\mu^{n+1})=
&(\mathcal{L}\phi^{n+1},3\phi^{n+1}-4\phi^n+\phi^{n-1})\\
&+\displaystyle\left(r^{n+1}b^{n+1},3\phi^{n+1}-4\phi^n+\phi^{n-1}\right)\\
&-2S(2\phi^n-\phi^{n-1},3\phi^{n+1}-4\phi^n+\phi^{n-1}).
\endaligned
\end{equation}
Applying the following identity
\begin{equation*}
\aligned
2(3x-4y+z,2y-z)=
&(|x|^2+|2x-y|^2-2|x-y|^2)\\
&-(|y|^2+|2y-z|^2-2|y-z|^2)-3|x-2y+z|^2,
\endaligned
\end{equation*}
and let $x=\phi^{n+1}$, $y=\phi^{n}$, $z=\phi^{n-1}$, we obtain
\begin{equation}\label{section3_sav11}
\aligned
&-2S(2\phi^n-\phi^{n-1},3\phi^{n+1}-4\phi^n+\phi^{n-1})\\
&=S(2\|\phi^{n+1}-\phi^{n}\|^2-\|\phi^{n+1}\|^2-\|2\phi^{n+1}-\phi^{n}\|^2)\\
&\quad-S(2\|\phi^{n}-\phi^{n-1}\|^2-\|\phi^{n}\|^2-\|2\phi^{n}-\phi^{n-1}\|^2)\\
&\quad+3S\|\phi^{n+1}-2\phi^{n}+\phi^{n-1}\|^2.
\endaligned
\end{equation}
Applying the following identity
\begin{equation*}
\aligned
&(x,3x-4y+z)=\frac12(|x|^2+|2x-y|^2)-\frac12(|y|^2+|2y-z|^2)+\frac12|x-2y+z|^2,
\endaligned
\end{equation*}
we obtain
\begin{equation}\label{section3_sav12}
\aligned
(\mathcal{L}\phi^{n+1},3\phi^{n+1}-4\phi^n+\phi^{n-1})
&=\frac12\left[(\mathcal{L}\phi^{n+1},\phi^{n+1})+(2\mathcal{L}\phi^{n+1}-\mathcal{L}\phi^{n},\phi^{n+1}-\phi^{n})\right]\\
&-\frac12\left[(\mathcal{L}\phi^{n},\phi^{n})+(2\mathcal{L}\phi^{n}-\mathcal{L}\phi^{n-1},\phi^{n}-\phi^{n-1})\right]\\
&+\frac12(\mathcal{L}(\phi^{n+1}-2\phi^n+\phi^{n-1}),(\phi^{n+1}-2\phi^n+\phi^{n-1})).
\endaligned
\end{equation}

By taking the $L^2$ inner product of the last equation in \eqref{section3_sav7} with $2r^{n+1}$, we obtain
\begin{equation}\label{section3_sav13}
\aligned
&(|r^{n+1}|^2+|2r^{n+1}-r^{n}|^2)-(|r^{n}|^2+|2r^{n}-r^{n-1}|^2)+|r^{n+1}-r^{n}+r^{n-1}|^2\\
&=\left(r^{n+1}b^{n+1},3\phi^{n+1}-4\phi^n+\phi^{n-1}\right).
\endaligned
\end{equation}

Combining the equations \eqref{section3_sav10}-\eqref{section3_sav13} with \eqref{section3_sav9}, we obtain
\begin{equation*}
\aligned
&\frac12\left[(\mathcal{L}\phi^{n+1},\phi^{n+1})+(2\mathcal{L}\phi^{n+1}-\mathcal{L}\phi^{n},\phi^{n+1}-\phi^{n})\right]\\
&+(|r^{n+1}|^2+|2r^{n+1}-r^{n}|^2)+S(2\|\phi^{n+1}-\phi^{n}\|^2-\|\phi^{n+1}\|^2-\|2\phi^{n+1}-\phi^{n}\|^2)\\
&-\frac12\left[(\mathcal{L}\phi^{n},\phi^{n})+(2\mathcal{L}\phi^{n}-\mathcal{L}\phi^{n-1},\phi^{n}-\phi^{n-1})\right]\\
&-(|r^{n}|^2+|2r^{n}-r^{n-1}|^2)-S(2\|\phi^{n}-\phi^{n-1}\|^2-\|\phi^{n}\|^2-\|2\phi^{n}-\phi^{n-1}\|^2)\\
&=E_{SAV}^{n+1}-E^{n}_{SAV}\leq2\Delta t(\mathcal{G}\mu^{n+1},\mu^{n+1})\leq0.
\endaligned
\end{equation*}
For $n=0$, by taking the inner products with $\Delta t\mu^{1}$, $(\phi^{1}-\phi^0)$, and $2\Delta tr^{1}$ for the three equations in scheme \eqref{section3_sav7*} respectively and using the following equations
\begin{equation*}
\aligned
&2(x-y,x)=|x|^2-|y|^2+|x-y|^2,\\
&2(x-y,y)=|x|^2-|y|^2-|x-y|^2,
\endaligned
\end{equation*}
we obtain
\begin{equation*}
\aligned
&\left(\frac12(\mathcal{L}\phi^{1},\phi^{1})+\|r^{1}\|^2+S\|\phi^{1}-\phi^{0}\|^2-S\|\phi^{1}\|^2\right)-\left(\frac12(\mathcal{L}\phi^{0},\phi^{0})+\|r^{0}\|^2-S\|\phi^{0}\|^2\right)\\
&=E_{SAV}^{1}-E^{0}_{SAV}\leq\Delta t(\mathcal{G}\mu^{n+1},\mu^{n+1})\leq0.
\endaligned
\end{equation*}
which completes the proof.
\end{proof}

\section{Stabilized IEQ and SAV approaches for phase field model}
In this section, we consider effective methods to improve the IEQ and SAV approaches for some phase field model. In some phase field models such as Allen-Cahn and Cahn-Hilliard models, we find that the functional $F(\phi)$ or $E_1(\phi)$ in square root has satisfy $F(\phi)\geq0$ or $E_1(\phi)\geq0$ for all $\phi$ indeed. However, it still require restrictive time steps for accuracy for IEQ and SAV approaches. We think a reasonable explanation is the new variable we added in IEQ and SAV approaches influences the properties of the coefficient matrix. We need to add a stabilizer to improve the the positivity of the coefficient matrix \cite{yang2019efficient,chen2019fast}.

A second-order stabilized IEQ scheme based on the Crank-Nicolson method, reads as follows
\begin{equation}\label{section4_ieq1}
  \left\{
   \begin{array}{rll}
\displaystyle\frac{\phi^{n+1}-\phi^{n}}{\Delta t}&=&\mathcal{G}\mu^{n+1/2},\\
\mu^{n+1/2}&=&\displaystyle\mathcal{L}\left(\frac{\phi^{n+1}+\phi^n}{2}\right)+S(\phi^{n+1}-\phi^{n})+\frac{q^{n+1}+q^n}{2\sqrt{F(\tilde{\phi}^{n+1/2})+C}}F'(\tilde{\phi}^{n+1/2}),\\
\displaystyle\frac{q^{n+1}-q^n}{\Delta t}&=&\displaystyle\frac{F'(\tilde{\phi}^{n+1/2})}{2\sqrt{F(\tilde{\phi}^{n+1/2})+C}}\frac{\phi^{n+1}-\phi^{n}}{\Delta t},
   \end{array}
   \right.
\end{equation}
where $S$ is a positive stabilizing parameter.

\begin{theorem}\label{section4_th_SIEQ}
The stabilized IEQ-CN scheme \eqref{section4_ieq1} for the phase field system is unconditionally energy stable in the sense that
\begin{equation}\label{section4_ieq2}
\aligned
E_{SIEQ-CN}^{n+1}-E^{n}_{SIEQ-CN}\leq\Delta t(\mathcal{G}\mu^{n+1/2},\mu^{n+1/2})\leq0,
\endaligned
\end{equation}
where the modified discrete version of the energy \eqref{section1_energy} is defined by
\begin{equation*}
\aligned
E_{SIEQ-CN}^{n}=\frac12(\mathcal{L}\phi^{n},\phi^{n})+\|q^{n}\|^2.
\endaligned
\end{equation*}
\end{theorem}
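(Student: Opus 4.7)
The plan is to mimic the argument already carried out for Theorem \ref{section2_th_MIEQ2}, but tracking the extra stabilizer term $S(\phi^{n+1}-\phi^{n})$ that appears in the expression for $\mu^{n+1/2}$. The goal is to produce a telescoping identity for $E_{SIEQ-CN}^{n}=\tfrac12(\mathcal{L}\phi^{n},\phi^{n})+\|q^{n}\|^2$ with a nonpositive remainder, so that the stabilizer contributes an additional dissipative term and the claim follows.

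First I would test the three equations of \eqref{section4_ieq1} against suitable quantities: take the $L^2$ inner product of the first equation with $\Delta t\,\mu^{n+1/2}$, of the second with $(\phi^{n+1}-\phi^{n})$, and of the third with $\Delta t(q^{n+1}+q^{n})$. The first test yields $(\phi^{n+1}-\phi^{n},\mu^{n+1/2})=\Delta t(\mathcal{G}\mu^{n+1/2},\mu^{n+1/2})\le 0$. The second test produces four pieces: using symmetry of $\mathcal{L}$ together with the identity $(\mathcal{L}(\phi^{n+1}+\phi^{n}),\phi^{n+1}-\phi^{n})=(\mathcal{L}\phi^{n+1},\phi^{n+1})-(\mathcal{L}\phi^{n},\phi^{n})$, the first piece gives the telescoping $\mathcal{L}$-contribution; the stabilizer piece gives exactly $S\|\phi^{n+1}-\phi^{n}\|^{2}$; and the remaining piece is the $F'/\sqrt{F+C}$-coupling. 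The third test produces $\|q^{n+1}\|^{2}-\|q^{n}\|^{2}$ on the left and, after cancellation of $\Delta t$, precisely the same $F'/\sqrt{F+C}$-coupling on the right.

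Combining these three identities, the coupling terms cancel and one is left with
\begin{equation*}
\Delta t(\mathcal{G}\mu^{n+1/2},\mu^{n+1/2}) = E_{SIEQ-CN}^{n+1}-E_{SIEQ-CN}^{n} + S\|\phi^{n+1}-\phi^{n}\|^{2}.
\end{equation*}
Since $S>0$, moving the stabilizer term to the other side gives
\begin{equation*}
E_{SIEQ-CN}^{n+1}-E_{SIEQ-CN}^{n} = \Delta t(\mathcal{G}\mu^{n+1/2},\mu^{n+1/2}) - S\|\phi^{n+1}-\phi^{n}\|^{2} \le \Delta t(\mathcal{G}\mu^{n+1/2},\mu^{n+1/2}) \le 0,
\end{equation*}
which is exactly \eqref{section4_ieq2}.

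No real obstacle is anticipated: the stabilizer $S(\phi^{n+1}-\phi^{n})$ is explicitly dissipative when tested against $(\phi^{n+1}-\phi^{n})$, so it simply strengthens the inequality rather than requiring any delicate bookkeeping with telescoping of differences (as was needed in Theorem \ref{section2_th_MIEQ2} where $-2S\tilde\phi^{n+1/2}$ forced the use of the three-term identity $-(x-y,3y-z)=\tfrac12|x-y|^{2}-\tfrac12|y-z|^{2}-|x|^{2}+|y|^{2}+\tfrac12|x-2y+z|^{2}$). The mildly delicate point is just verifying that the $F'$-coupling terms produced by testing the second and third equations really agree, which is immediate from the common factor $F'(\tilde{\phi}^{n+1/2})/(2\sqrt{F(\tilde{\phi}^{n+1/2})+C})$; once that cancellation is checked, the remainder of the proof is a routine assembly.
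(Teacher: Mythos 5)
Your proposal is correct and takes essentially the same approach the paper indicates: testing the three equations of \eqref{section4_ieq1} with $\Delta t\,\mu^{n+1/2}$, $\phi^{n+1}-\phi^{n}$ and $\Delta t(q^{n+1}+q^{n})$, cancelling the common $F'/\bigl(2\sqrt{F+C}\bigr)$ coupling terms, and noting that the stabilizer yields the nonnegative dissipative contribution $S\|\phi^{n+1}-\phi^{n}\|^{2}$. The paper leaves these ``simple calculations'' unwritten, and your write-up supplies exactly the intended details, culminating in $E_{SIEQ-CN}^{n+1}-E_{SIEQ-CN}^{n}=\Delta t(\mathcal{G}\mu^{n+1/2},\mu^{n+1/2})-S\|\phi^{n+1}-\phi^{n}\|^{2}\le\Delta t(\mathcal{G}\mu^{n+1/2},\mu^{n+1/2})\le 0$.
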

By taking the inner products with $\Delta t\mu^{n+1/2}$, $(\phi^{n+1}-\phi^n)$, and $\Delta t(q^{n+1}+q^n)$ for the three equations in scheme \eqref{section4_ieq1} respectively and some simple calculations, it is also not difficult to obtain the proof.

A semi-implicit stabilized SAV scheme based on BDF2 reads as:
\begin{equation}\label{section4_sav1}
\aligned
&\displaystyle\frac{3\phi^{n+1}-4\phi^n+\phi^{n-1}}{2\Delta t}=\mathcal{G}\mu^{n+1}, \\
&\mu^{n+1}= \displaystyle\mathcal{L}\phi^{n+1}+S(\phi^{n+1}-2\phi^n+\phi^{n-1})+\frac{r^{n+1}}{\sqrt{E_1(\tilde{\phi}^{n+1})+\kappa}}U(\tilde{\phi}^{n+1}),\\
&\displaystyle\frac{3r^{n+1}-4r^{n}+r^{n-1}}{2\Delta t} = \displaystyle\frac{1}{2\sqrt{E_1(\tilde{\phi}^{n+1})+\kappa}}\int_{\Omega}U(\tilde{\phi}^{n+1})\frac{3\phi^{n+1}-4\phi^{n}+\phi^{n-1}}{2\Delta t}d\textbf{x},
\endaligned
\end{equation}
where $S$ is a positive stabilizing parameter.
\begin{theorem}\label{section4_th_SAV1}
The stabilized SAV-BDF2 scheme \eqref{section4_sav1} for the phase field system is unconditionally energy stable in the sense that
\begin{equation}\label{section4_sav2}
\aligned
E_{SSAV}^{n+1}-E^{n}_{SSAV}\leq2\Delta t(\mathcal{G}\mu^{n+1},\mu^{n+1})\leq0,
\endaligned
\end{equation}
where the modified discrete version of the energy \eqref{section1_energy} is defined by
\begin{equation*}
\aligned
E_{SSAV}^{n}=&\frac12\left[(\mathcal{L}\phi^{n},\phi^{n})+(2\mathcal{L}\phi^{n}-\mathcal{L}\phi^{n-1},\phi^{n}-\phi^{n-1})\right]+(|r^{n}|^2+|2r^{n}-r^{n-1}|^2)\\
&+S(\|\phi^{n}-\phi^{n-1}\|^2-\|\phi^{n-1}-\phi^{n-2}\|^2), \quad n\geq2,
\endaligned
\end{equation*}
\begin{equation*}
\aligned
E_{SSAV}^{0}=\frac12(\mathcal{L}\phi^{0},\phi^{0})+\|r^{0}\|^2+S\|\phi^{0}\|^2.
\endaligned
\end{equation*}
and
\begin{equation*}
\aligned
E_{SSAV}^{1}=\frac12(\mathcal{L}\phi^{1},\phi^{1})+\|r^{1}\|^2+S\|\phi^{1}-\phi^{0}\|^2-S\|\phi^{0}\|^2.
\endaligned
\end{equation*}
\end{theorem}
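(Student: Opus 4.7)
The plan is to adapt the BDF2 energy-identity argument used for Theorem \ref{section3_th_MSAV2} to incorporate the extra stabilizer $S(\phi^{n+1}-2\phi^n+\phi^{n-1})$ on the right-hand side of the chemical potential equation. First I would test the three equations of \eqref{section4_sav1} with $2\Delta t\,\mu^{n+1}$, $(3\phi^{n+1}-4\phi^n+\phi^{n-1})$, and $4\Delta t\,r^{n+1}$ respectively. Testing the first equation immediately yields $(3\phi^{n+1}-4\phi^n+\phi^{n-1},\mu^{n+1}) = 2\Delta t(\mathcal{G}\mu^{n+1},\mu^{n+1})\le 0$, which is the target right-hand side. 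Adding the tested second and third equations causes the nonlinear coupling term $\bigl(\tfrac{r^{n+1}}{\sqrt{E_1(\tilde\phi^{n+1})+\kappa}}U(\tilde\phi^{n+1}),\,3\phi^{n+1}-4\phi^n+\phi^{n-1}\bigr)$ to cancel, exactly as in the standard SAV analysis.

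Next I would apply the polarization identity $(x,3x-4y+z)=\tfrac12(|x|^2+|2x-y|^2)-\tfrac12(|y|^2+|2y-z|^2)+\tfrac12|x-2y+z|^2$ from the proof of Theorem \ref{section3_th_MSAV2} separately to the elliptic piece $(\mathcal{L}\phi^{n+1},3\phi^{n+1}-4\phi^n+\phi^{n-1})$ and to the scalar piece $2r^{n+1}(3r^{n+1}-4r^n+r^{n-1})$. These deliver the telescoping parts $\tfrac12[(\mathcal{L}\phi^n,\phi^n)+(2\mathcal{L}\phi^n-\mathcal{L}\phi^{n-1},\phi^n-\phi^{n-1})]$ and $|r^n|^2+|2r^n-r^{n-1}|^2$ recorded in $E_{SSAV}^n$, together with non-negative remainders of the form $\tfrac12(\mathcal{L}(\phi^{n+1}-2\phi^n+\phi^{n-1}),\phi^{n+1}-2\phi^n+\phi^{n-1})$ and $\tfrac12|r^{n+1}-2r^n+r^{n-1}|^2$ that can be discarded since $\mathcal{L}$ is non-negative.

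The key new step is the stabilization contribution $S(\phi^{n+1}-2\phi^n+\phi^{n-1},\,3\phi^{n+1}-4\phi^n+\phi^{n-1})$. Setting $d^k=\phi^{k+1}-\phi^k$, one has $\phi^{n+1}-2\phi^n+\phi^{n-1}=d^n-d^{n-1}$ and $3\phi^{n+1}-4\phi^n+\phi^{n-1}=3d^n-d^{n-1}$; a direct expansion gives the exact identity $(d^n-d^{n-1},\,3d^n-d^{n-1})=\|d^n\|^2-\|d^{n-1}\|^2+2\|d^n-d^{n-1}\|^2$. Multiplying by $S$ produces a clean telescoping $S\|\phi^{n+1}-\phi^n\|^2-S\|\phi^n-\phi^{n-1}\|^2$ plus a non-negative dissipation $2S\|\phi^{n+1}-2\phi^n+\phi^{n-1}\|^2$, and this is precisely what is needed to absorb the stabilizer into $E_{SSAV}^{n+1}-E_{SSAV}^n$.

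Collecting all of the telescoping pieces then yields $E_{SSAV}^{n+1}-E_{SSAV}^n\le 2\Delta t(\mathcal{G}\mu^{n+1},\mu^{n+1})\le 0$ for $n\ge 1$. The startup case $n=0$ is handled by testing the one-step initialization (the same used in Theorem \ref{section3_th_MSAV2}) with $\Delta t\,\mu^1$, $(\phi^1-\phi^0)$, and $2\Delta t\,r^1$, applying the first-order identities $2(x-y,x)=|x|^2-|y|^2+|x-y|^2$ and $2(x-y,y)=|x|^2-|y|^2-|x-y|^2$ to recover the transition $E_{SSAV}^1-E_{SSAV}^0\le\Delta t(\mathcal{G}\mu^1,\mu^1)\le 0$. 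The main obstacle I expect is the careful index-bookkeeping that matches the stabilization telescoping to the precise form of $E_{SSAV}^n$ stated in the theorem; the algebraic identity for $(d^n-d^{n-1},3d^n-d^{n-1})$ above is the decisive algebraic fact, and once it is in hand the rest of the proof is a routine, if meticulous, reorganization of terms.
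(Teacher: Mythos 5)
Your proposal is correct and follows the paper's (sketched) proof exactly: the same three test functions ($2\Delta t\,\mu^{n+1}$, $3\phi^{n+1}-4\phi^n+\phi^{n-1}$, and $2r^{n+1}$ --- your $4\Delta t\,r^{n+1}$ differs only by the factor absorbed through the BDF2 denominator), the same cancellation of the SAV coupling term, and your decisive identity $(d^n-d^{n-1},3d^n-d^{n-1})=\|d^n\|^2-\|d^{n-1}\|^2+2\|d^n-d^{n-1}\|^2$ is precisely the paper's identity $(x-2y+z,3x-4y+z)=|x-y|^2-|y-z|^2+2|x-2y+z|^2$ rewritten in difference notation. The index-bookkeeping obstacle you flag is real but lies in the paper's statement rather than in your argument: the telescoping produced by this identity naturally yields a modified energy containing $+S\|\phi^n-\phi^{n-1}\|^2$ (plus the discarded dissipation $2S\|\phi^{n+1}-2\phi^n+\phi^{n-1}\|^2$), so the extra $-S\|\phi^{n-1}-\phi^{n-2}\|^2$ in the printed $E^{n}_{SSAV}$ appears to be a misprint that the paper's own one-line proof would not deliver either.
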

By taking the inner products with $2\Delta t\mu^{n+1}$, $(3\phi^{n+1}-4\phi^n+\phi^{n-1})$ and $2r^{n+1}$ for the three equations in scheme \eqref{section4_sav1} respectively and using the following identity
\begin{equation*}
\aligned
&(x-2y+z,3x-4y+z)=|x-y|^2-|y-z|^2+2|x-2y+z|^2,
\endaligned
\end{equation*}
it is not difficult to obtain the proof.

\section{Numerical experiments}
In this section, we present some numerical examples for some classical phase field models such as Allen-Cahn model, Cahn-Hilliard model, phase field crystal model and Swift-Hohenberg model in 2D to test our theoretical analysis which contain energy stability and convergence rates of the proposed numerical schemes. We use the finite difference method for spatial discretization for all numerical examples.

\subsection{Stabilized IEQ and SAV schemes for Allen-Cahn and Cahn-Hilliard models}

The Allen-Cahn and Cahn-Hilliard models are both classical phase field models and have been widely used in many fields involving physics, materials science, finance and image processing \cite{chen2018accurate,chen2018power,du2018stabilized}.

Consider the following Lyapunov energy functional:
\begin{equation}\label{section5_energy1}
E(\phi)=\int_{\Omega}(\frac{\epsilon^2}{2}|\nabla \phi|^2+F(\phi))d\textbf{x},
\end{equation}
where the most commonly used form Ginzburg-Landau double-well type potential is defined as $F(\phi)=\frac{1}{4}(\phi^2-1)^2$.

By applying the variational approach for the free energy \eqref{section5_energy1} leads to
\begin{equation}\label{section5_e_model}
  \left\{
   \begin{array}{rlr}
\displaystyle\frac{\partial \phi}{\partial t}&=M\mathcal{G}\mu,     &(\textbf{x},t)\in\Omega\times J,\\
                                          \mu&=-\Delta \phi+f(\phi),&(\textbf{x},t)\in\Omega\times J,
   \end{array}
   \right.
  \end{equation}
 where $J=(0,T]$, $M$ is the mobility constant, $\mathcal{G}=-1$ for the Allen-Cahn type system and $\mathcal{G}=\Delta$ for the Cahn-Hilliard type system. $\mu$ is the chemical potential, and $f(\phi)=F^{\prime}(\phi)$.

We find that $F(\phi)\geq0$ for all values of $\phi$ in Allen-Cahn and Cahn-Hilliard models. So we do not need to add any function $M(\phi)$ to keep $\widetilde{F}(\phi)=F(\phi)+M(\phi)\geq0$. What we only need is to add a stabilizer in numerical scheme which has introduced in above section.

In the following example, we study the phase separation behavior using the stabilized IEQ-CN scheme and stabilized SAV-BDF2 scheme.

\textbf{Example 1}: In the following, we take $\epsilon=0.01$, $M=1$, $\kappa=0$ and the stabilizing parameter $S=2$. The initial condition is chosen as
\begin{equation*}
\aligned
\phi_0(x,y,0)=\sum\limits_{i=1}^2-\tanh\left(\frac{\sqrt{(x-x_i)^2+(y-y_i)^2}-R_0}{\sqrt{2}\epsilon}\right)+1.
\endaligned
\end{equation*}
with the radius $R_0=0.36$, $(x_1,y_1)=(0.4,0)$ and $(x_2,y_2)=(-0.4,0)$. Initially, two bubbles, centered at $(0.4,0)$ and $(-0.4,0)$, respectively, are osculating or "kissing".

In the simulation of Cahn-Hilliard model, we found that if we use IEQ and SAV approaches, it may require restrictive time steps for accuracy. To be specific, from Figure \ref{fig1}, we see that the energy will not decay if we do not choose the time steps under $10^{-6}$ for IEQ method and $10^{-5}$ for SAV method. However, if we use stabilized IEQ and SAV methods, this situation can be easily improved, which can be seen in Figure \ref{fig2}. The process coalescence of two bubbles is demonstrated in Figure \ref{fig3} by using stabilized SAV approach. The similar features to those of Cahn-Hilliard model can obtain in \cite{ainsworth2017analysis}.
\begin{figure}[htp]
\centering
\includegraphics[width=6cm,height=8cm]{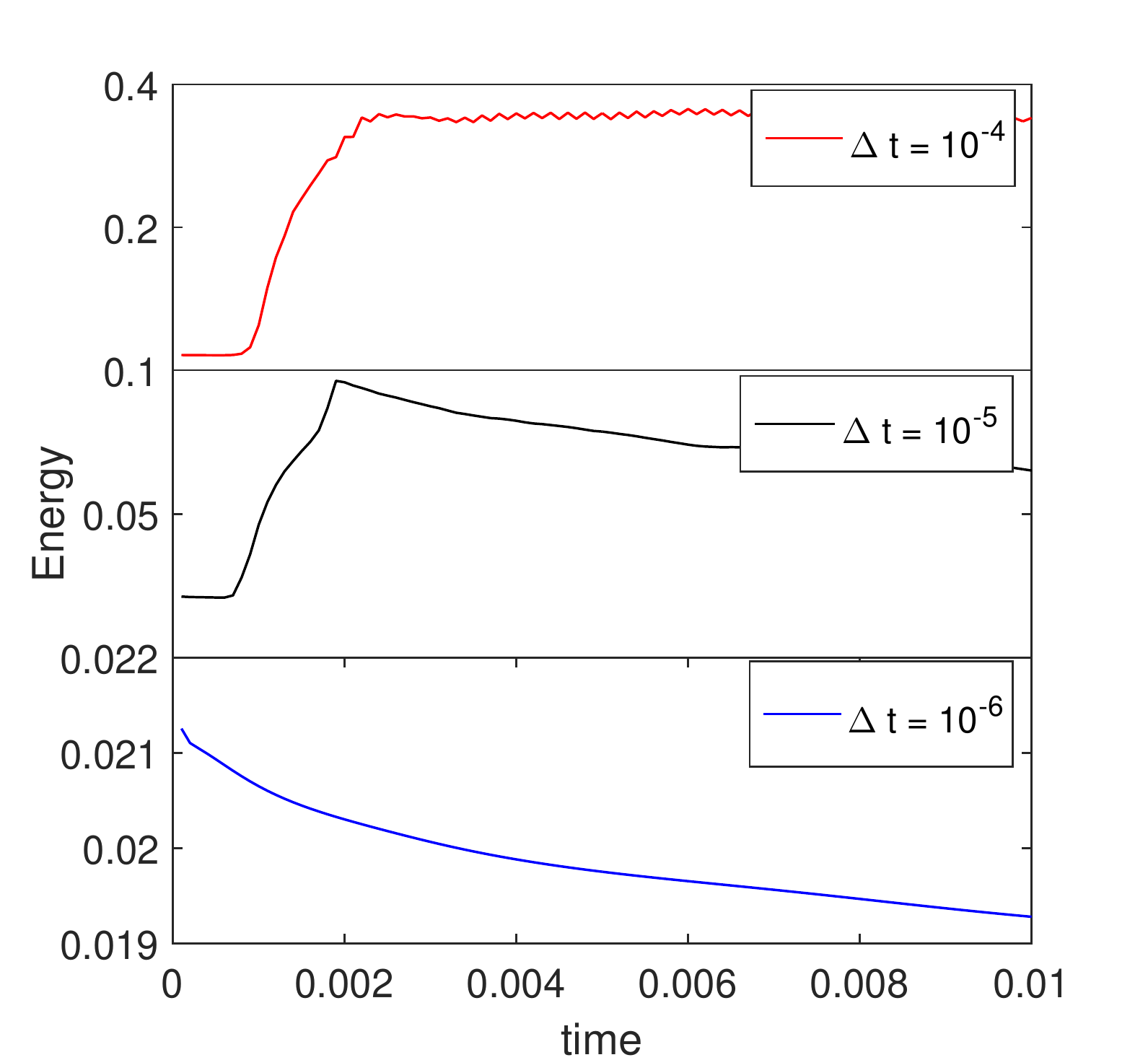}
\includegraphics[width=6cm,height=8cm]{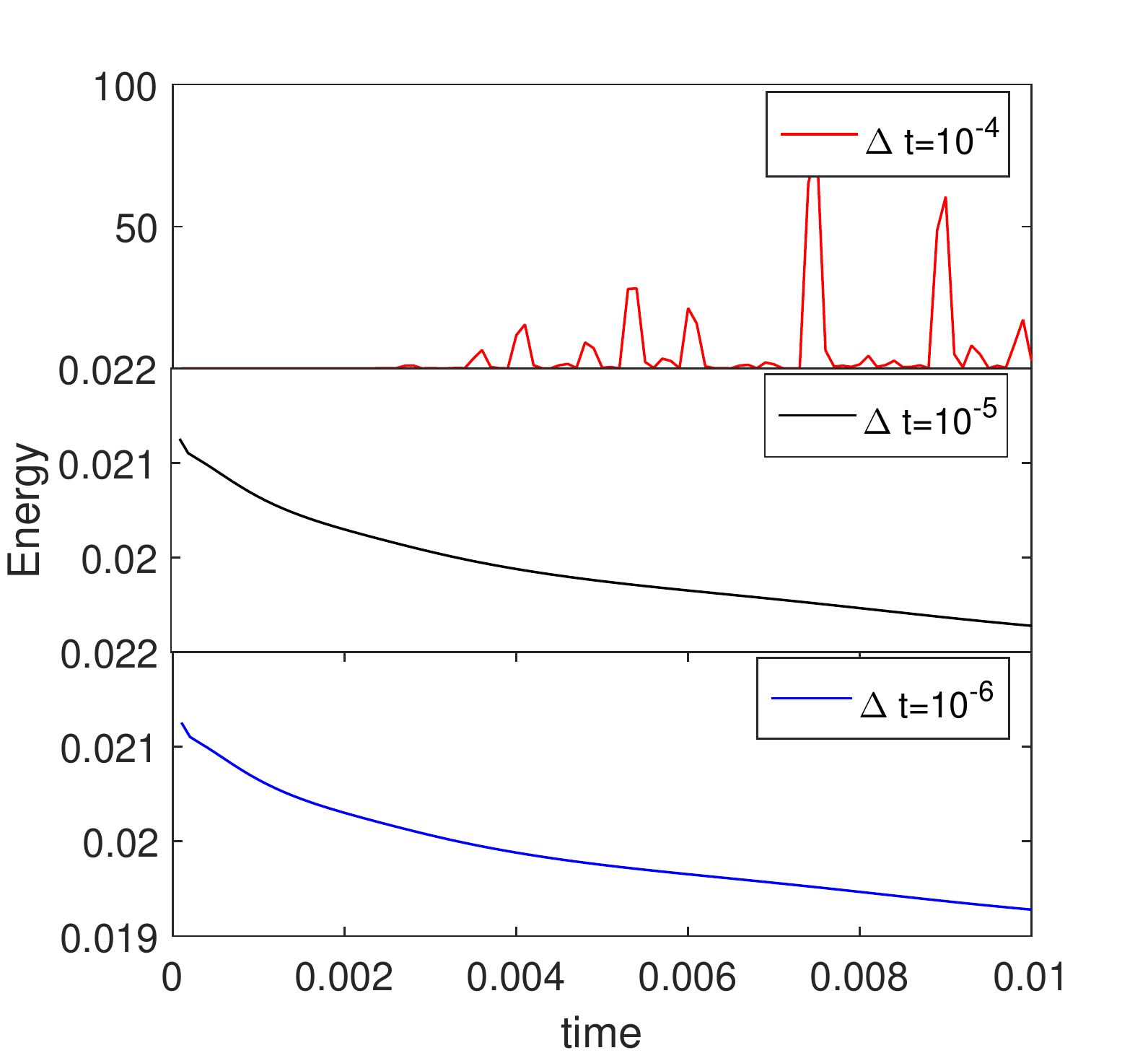}
\caption{Energy evolution for the Cahn-Hilliard equation for example 1 using IEQ (left), and SAV (right) approaches with different time steps.}\label{fig1}
\end{figure}

\begin{figure}[htp]
\centering
\includegraphics[width=6cm,height=6cm]{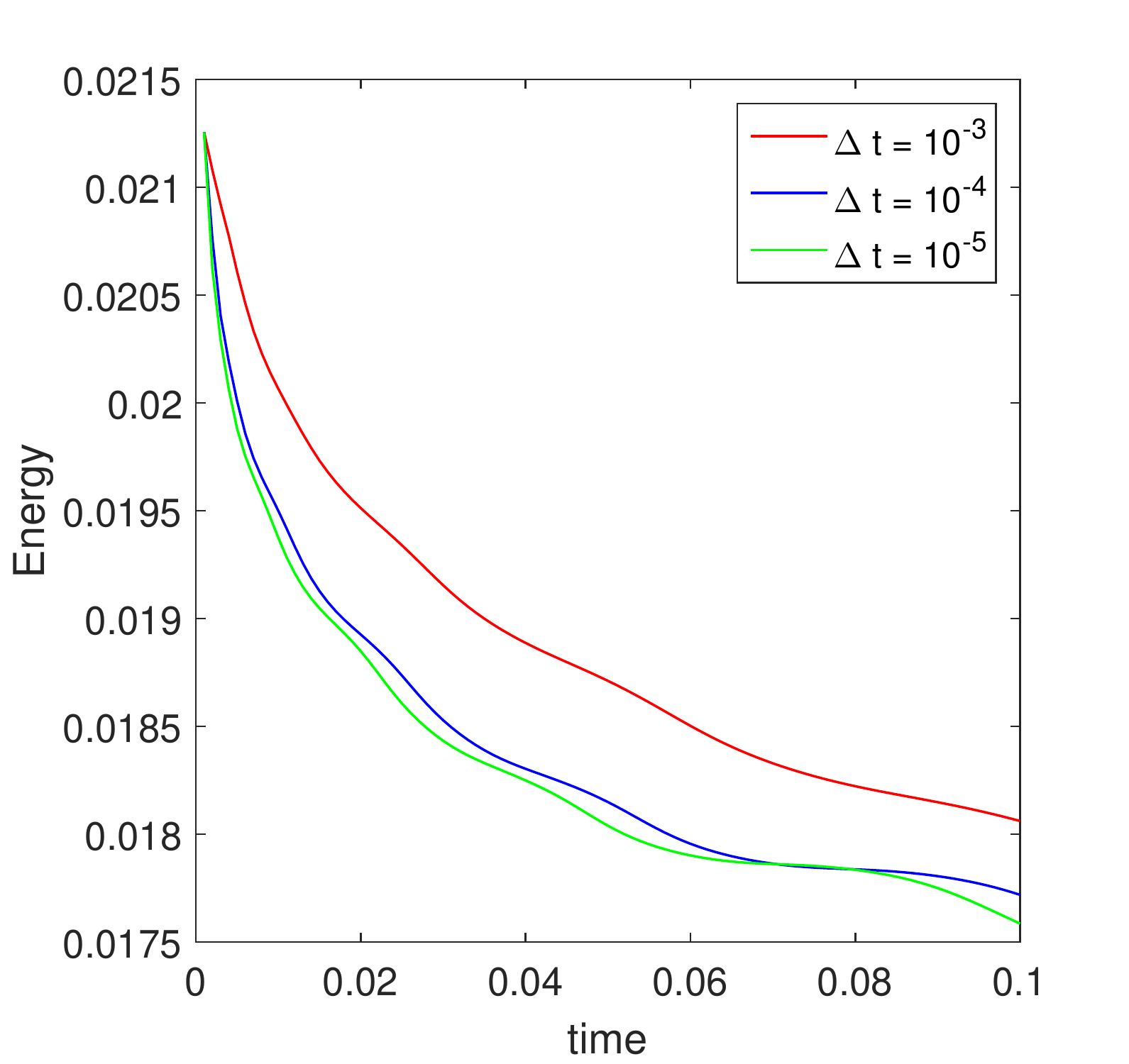}
\includegraphics[width=6cm,height=6cm]{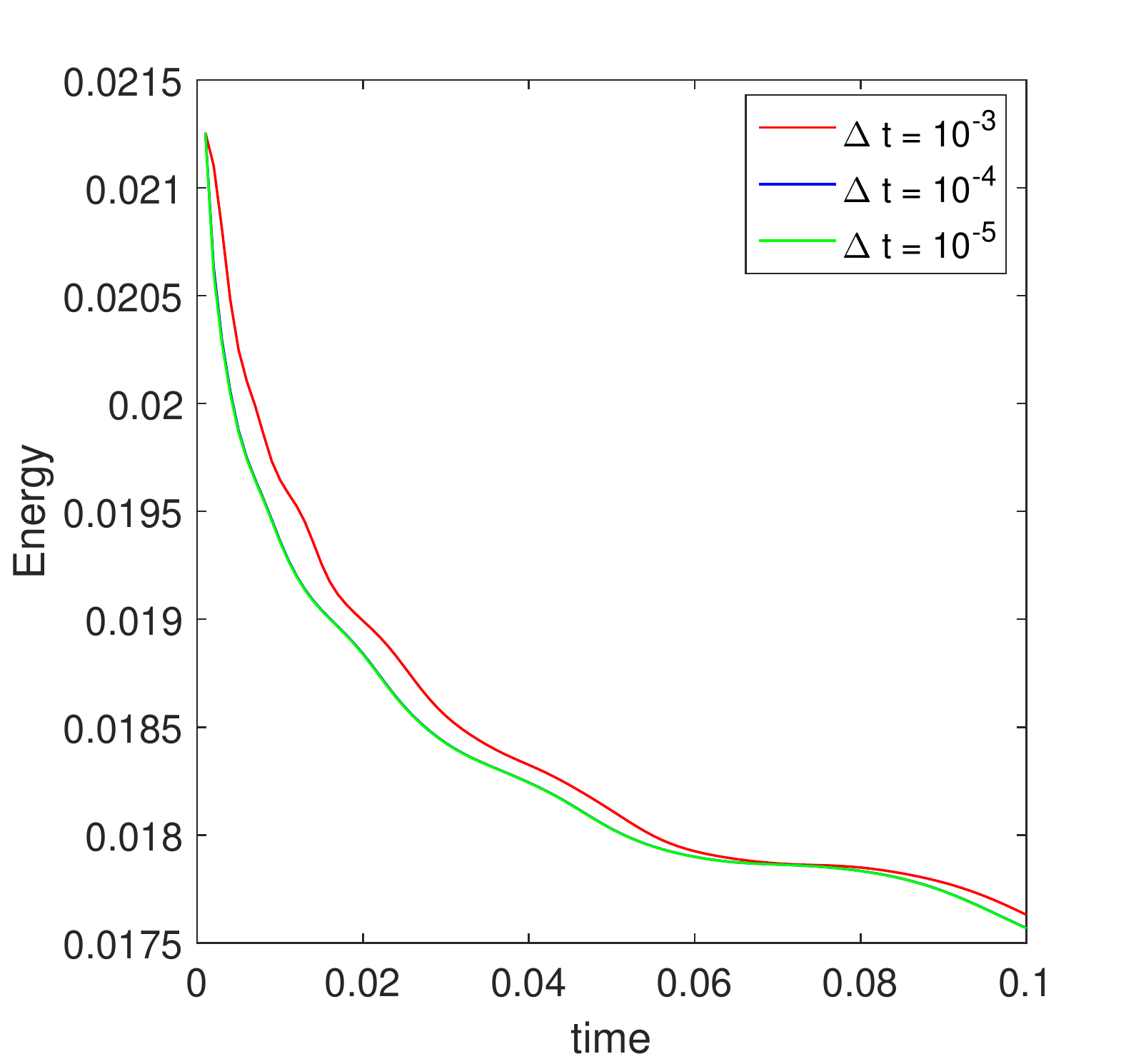}
\caption{Energy evolution for the Cahn-Hilliard equation for example 1 using IEQ, and SAV approaches with different time steps.}\label{fig2}
\end{figure}

\begin{figure}[htp]
\centering
\subfigure[t=0]{
\includegraphics[width=4cm,height=4cm]{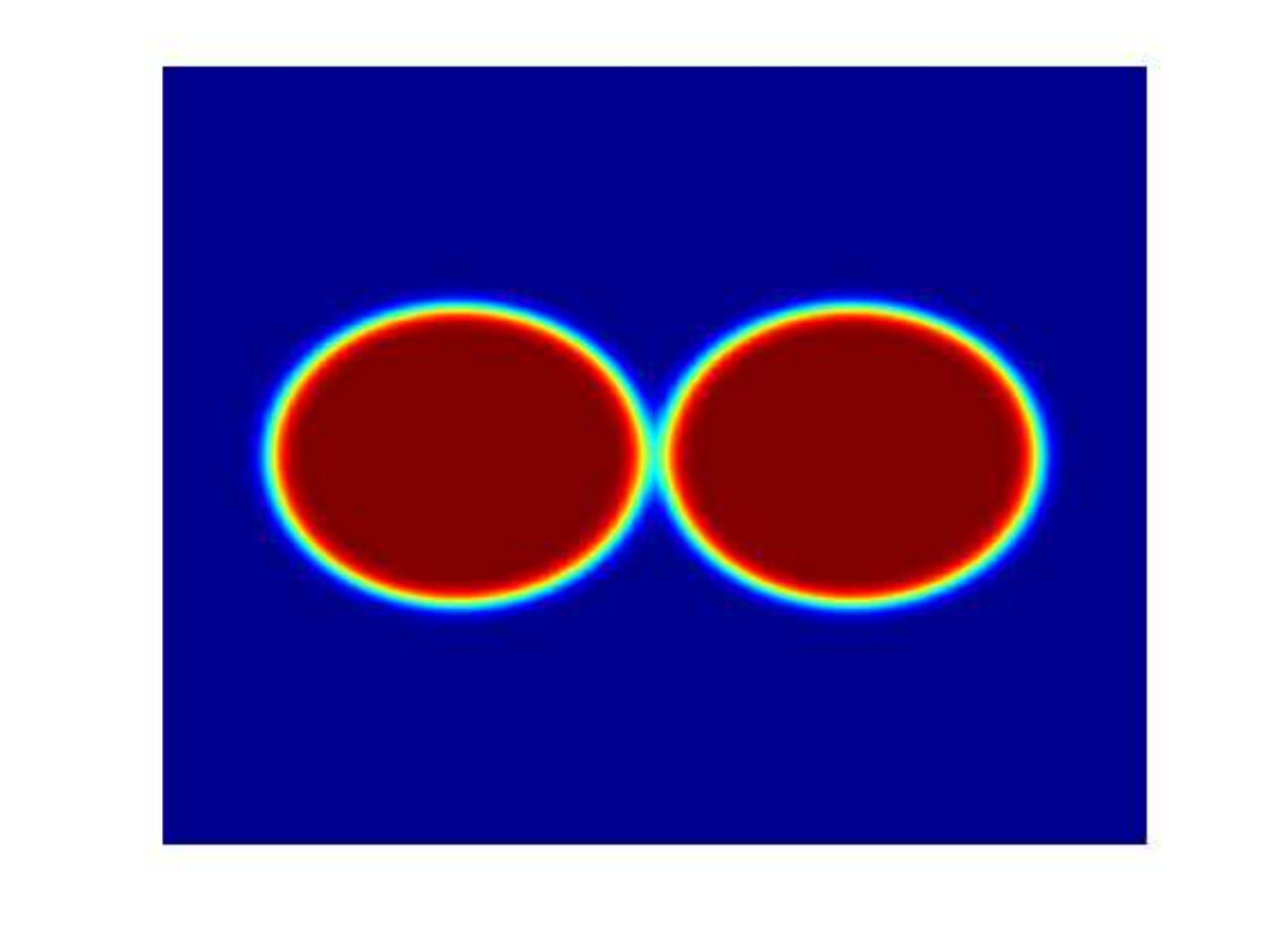}
}
\subfigure[t=0.01]
{
\includegraphics[width=4cm,height=4cm]{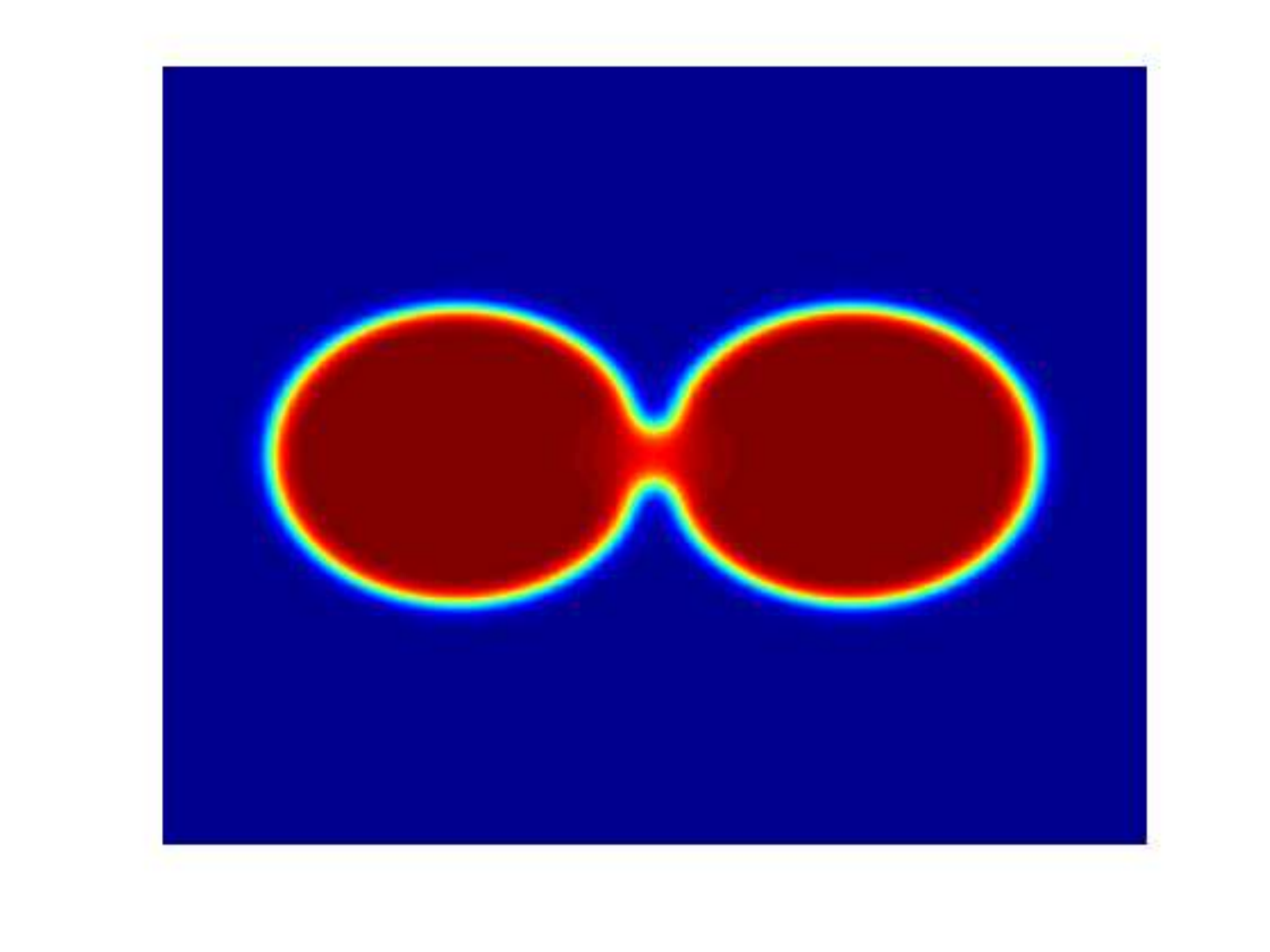}
}
\subfigure[t=0.02]
{
\includegraphics[width=4cm,height=4cm]{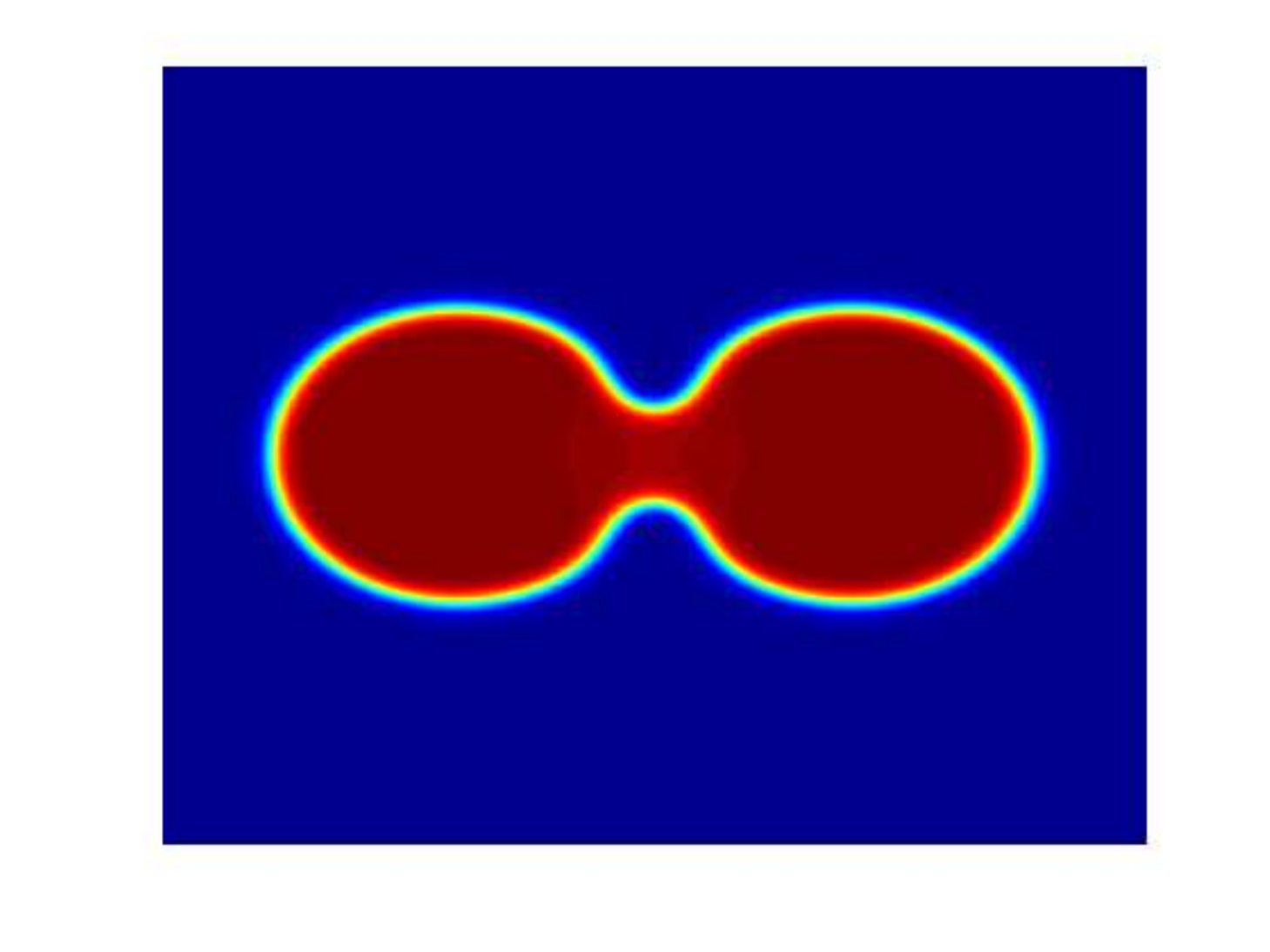}
}
\quad
\subfigure[t=0.1]{
\includegraphics[width=4cm,height=4cm]{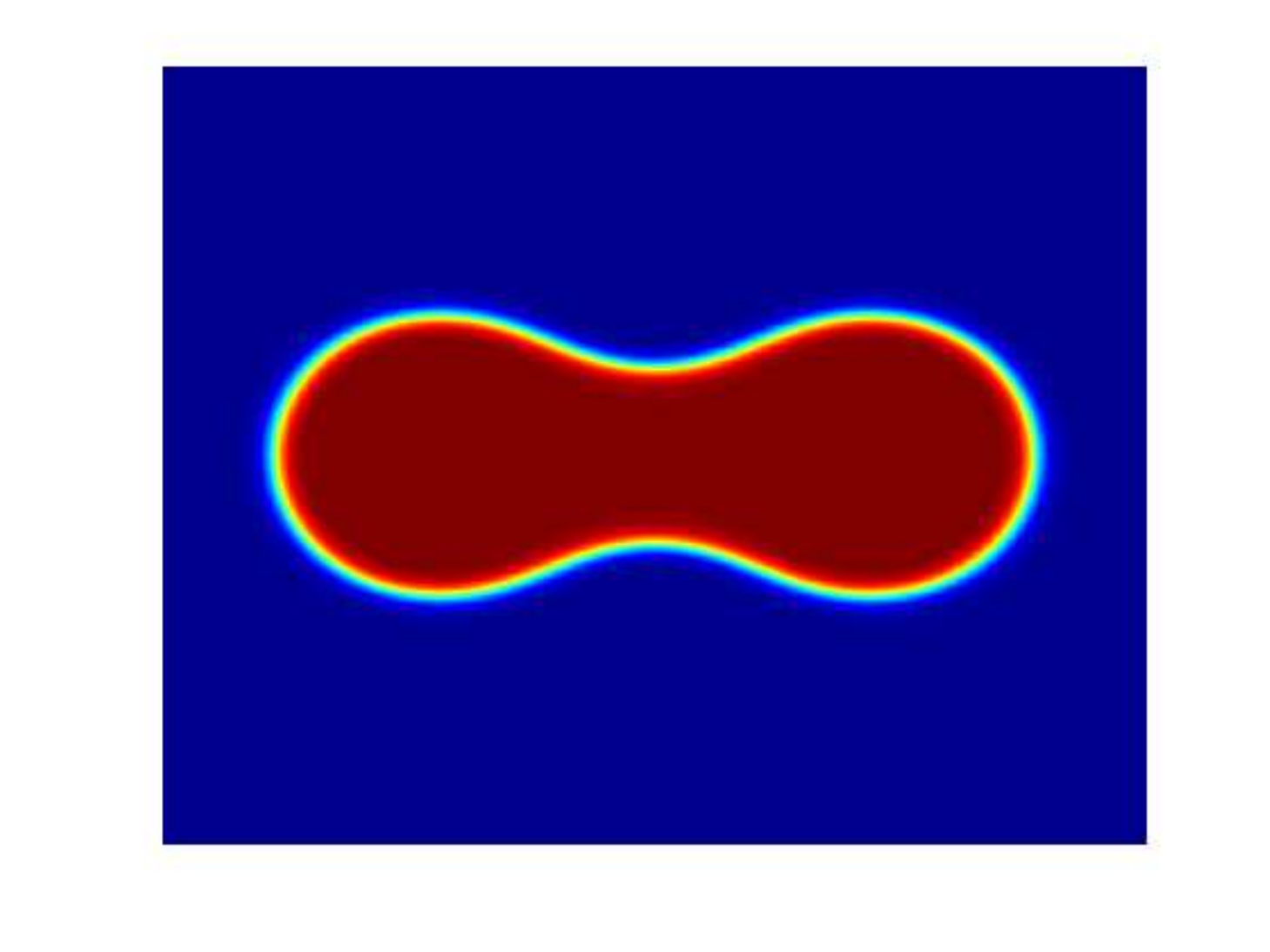}
}
\subfigure[t=0.5]
{
\includegraphics[width=4cm,height=4cm]{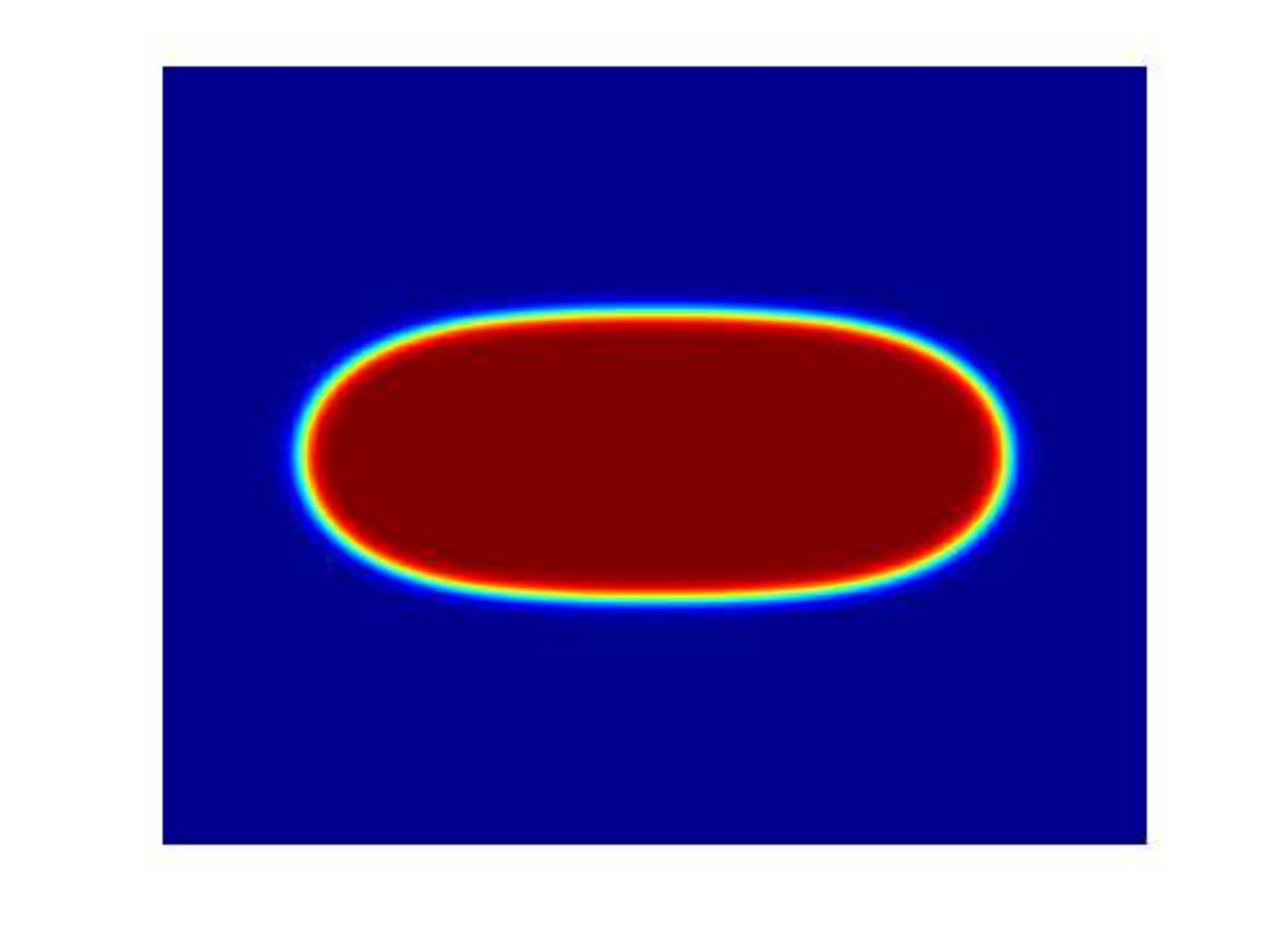}
}
\subfigure[t=1]
{
\includegraphics[width=4cm,height=4cm]{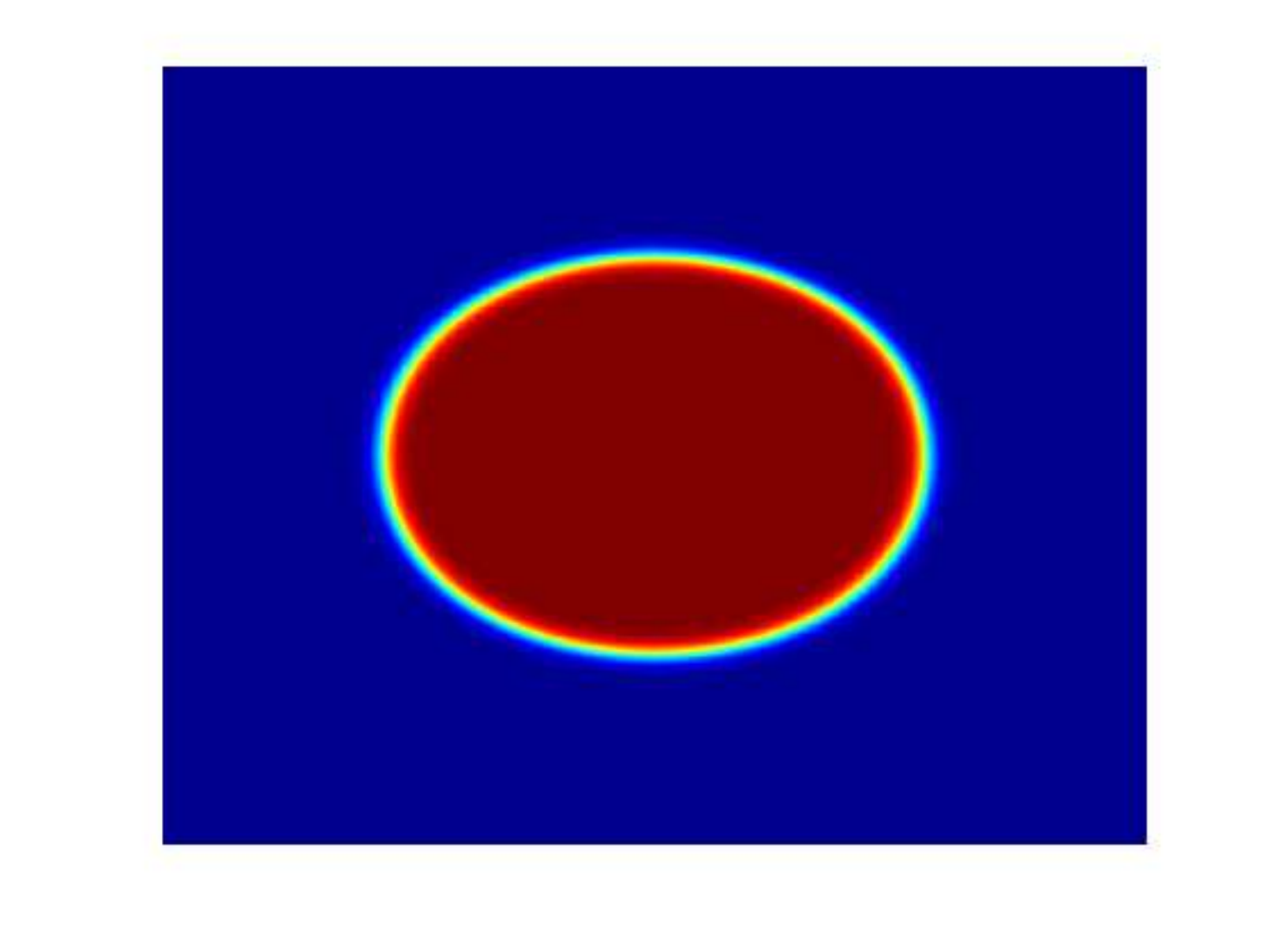}
}
\caption{Snapshots of the phase variable $\phi$ are taken at t=0, 0.01, 0.02, 0.1, 0.5, 1 for example 1.}\label{fig3}
\end{figure}

\subsection{MIEQ and MSAV approaches for phase field crystal model}

In this subsection, we will give some examples to show a comparative study of classical IEQ, SAV, MIEQ and MSAV approaches. We first give an example to test convergence rates of the proposed schemes for the phase field crystal equation in two dimension and check the efficiency and accuracy.

The phase field crystal equation can be written as follows:
\begin{equation*}
\frac{\partial \phi}{\partial t}=\Delta\mu=\Delta\left(\phi^3-\epsilon\phi+(1+\Delta)^2\phi\right), \quad(\textbf{x},t)\in\Omega\times Q,
\end{equation*}
The Swift-Hohenberg free energy takes the form:
\begin{equation*}
E(\phi)=\int_{\Omega}\left(\frac{1}{4}\phi^4+\frac{1}{2}\phi\left(-\epsilon+(1+\Delta)^2\right)\phi\right)d\textbf{x},
\end{equation*}
Here $F(\phi)=\frac14\phi^4-\frac\epsilon2\phi^2$.

\textbf{Example 2}: we choose the suitable forcing functions such that the exact solution is given by
\begin{equation}\label{section5_e1}
\aligned
\phi(x,y,t)=\cos(t)\sin(\frac{2\pi x}{16})\sin(\frac{2\pi y}{16}).
\endaligned
\end{equation}

The computational domain is set to be $\Omega=[0,32]\times[0,32]$ and the order parameters are $\epsilon=0.2$, $T=1$. In MIEQ scheme, we choose $M(\phi)=(1+\epsilon)\phi^2$ and in MSAV scheme, we choose $E_0(\phi)=(1+\epsilon)\int_{\Omega}\phi^2d\textbf{x}$.

We list the $L^2$ errors and temporal convergence rates of the phase variable between the numerical solution and the exact solution at $T=1$ with different time step sizes by choosing constant parameter in square root $C=1$ for IEQ and MIEQ schemes and $C=10$ for SAV and MSAV schemes. We find that the four schemes can all achieve almost perfect second order accuracy in time. However, MIEQ and MSAV approaches are more accurate than IEQ and SAV approaches for both CN and BDF schemes. Furthermore, if we choose the parameter $0\leq C<<1$ in square root for IEQ approach and $0\leq C<<10$ for SAV approach, both approaches are failure to obtain right convergence rates, but MIEQ and MSAV approaches are always effective.
\begin{table}[h!b!p!]
\small
\renewcommand{\arraystretch}{1.1}
\centering
\caption{\small the $L_2$ errors, temporal convergence rates for Example 1 with initial value $\phi_0(x,y)=\sin(2\pi x/16)\cos(2\pi y/16)$ and $h=0.01$.}\label{tab:tab1}
\begin{tabular}{ccccccccccc}
\hline
$\Delta t$&\multicolumn{5}{c}{CN scheme}&\multicolumn{5}{c}{BDF scheme}\\
\cline{2-11}
&\multicolumn{2}{c}{IEQ}&\multicolumn{3}{c}{MIEQ}&\multicolumn{2}{c}{SAV}&\multicolumn{3}{c}{MSAV}\\
\cline{2-6}\cline{7-11}
&$L_2$ error&Rate&&$L_2$ error&Rate&$L_2$ error&Rate&&$L_2$ error&Rate\\
\hline
$2^{-4}$  &8.0801e-3&-    &&1.1994e-3&-   &3.1327e-2&-      &&4.4042e-3&-     \\
$2^{-5}$  &2.0627e-3&1.97 &&3.2270e-4&1.89&7.7691e-3&2.01   &&1.1427e-3&1.95     \\
$2^{-6}$  &5.2046e-4&1.99 &&8.3533e-5&1.95&1.9336e-3&2.01   &&3.1090e-4&1.88     \\
$2^{-7}$  &1.3067e-4&1.99 &&2.1242e-5&1.98&4.8229e-4&2.00   &&8.1657e-5&1.93     \\
$2^{-8}$  &3.2737e-5&2.00 &&5.3555e-6&1.99&1.2042e-4&2.00   &&2.0941e-5&1.96     \\
$2^{-9}$  &8.1927e-6&2.00 &&1.3443e-6&1.99&3.0088e-5&2.00   &&5.3032e-6&1.98     \\
$2^{-10}$ &2.0492e-6&2.00 &&3.3677e-7&2.00&7.5197e-6&2.00   &&1.3344e-6&1.99     \\
\hline
\end{tabular}
\end{table}

Next, we plan to simulate the phase transition behavior of the phase field crystal model. The similar numerical example can be found in \cite{li2017efficient,yang2017linearly}.

\textbf{Example 3}: The initial condition is
\begin{equation}\label{section5_e2}
\aligned
&\phi_0(x,y)=0.07+0.07\times rand(x,y),
\endaligned
\end{equation}
where the $rand(x,y)$ is the random number in $[-1,1]$ with zero mean. The order parameter is $\epsilon=0.025$, $\Omega=[-50,50]^2$, $\tau=1$.

In this example, we use MIEQ approach to show the phase transition behavior of the density field for different values at various times in Figure \ref{fig4}. Similar computation results for phase field crystal model can be found in \cite{yang2017linearly}. Figure \ref{fig5} displays the time evolution of the energy functional $E(\phi)$ by using IEQ and MIEQ approaches. It is clearly shown that the energy is non-increasing in time and it means that the numerical result is energy stable. Furthermore, this comparative study between IEQ and MIEQ approaches by drop speed of the energy $E(\phi)$ indicates that MIEQ approach is efficient improvement for IEQ approach.
\begin{figure}[htp]
\centering
\subfigure[t=40]{
\includegraphics[width=4cm,height=4cm]{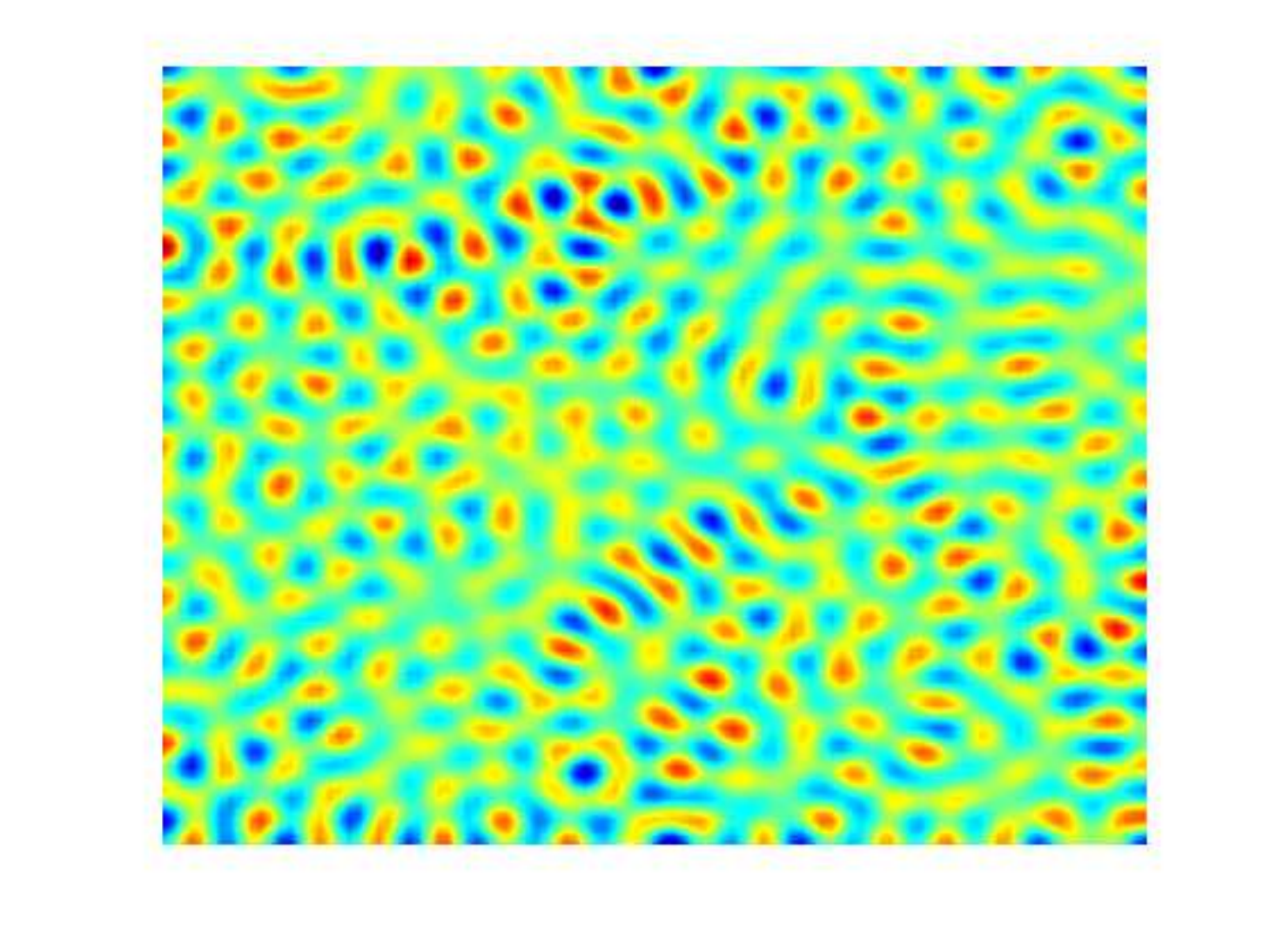}
}
\subfigure[t=100]
{
\includegraphics[width=4cm,height=4cm]{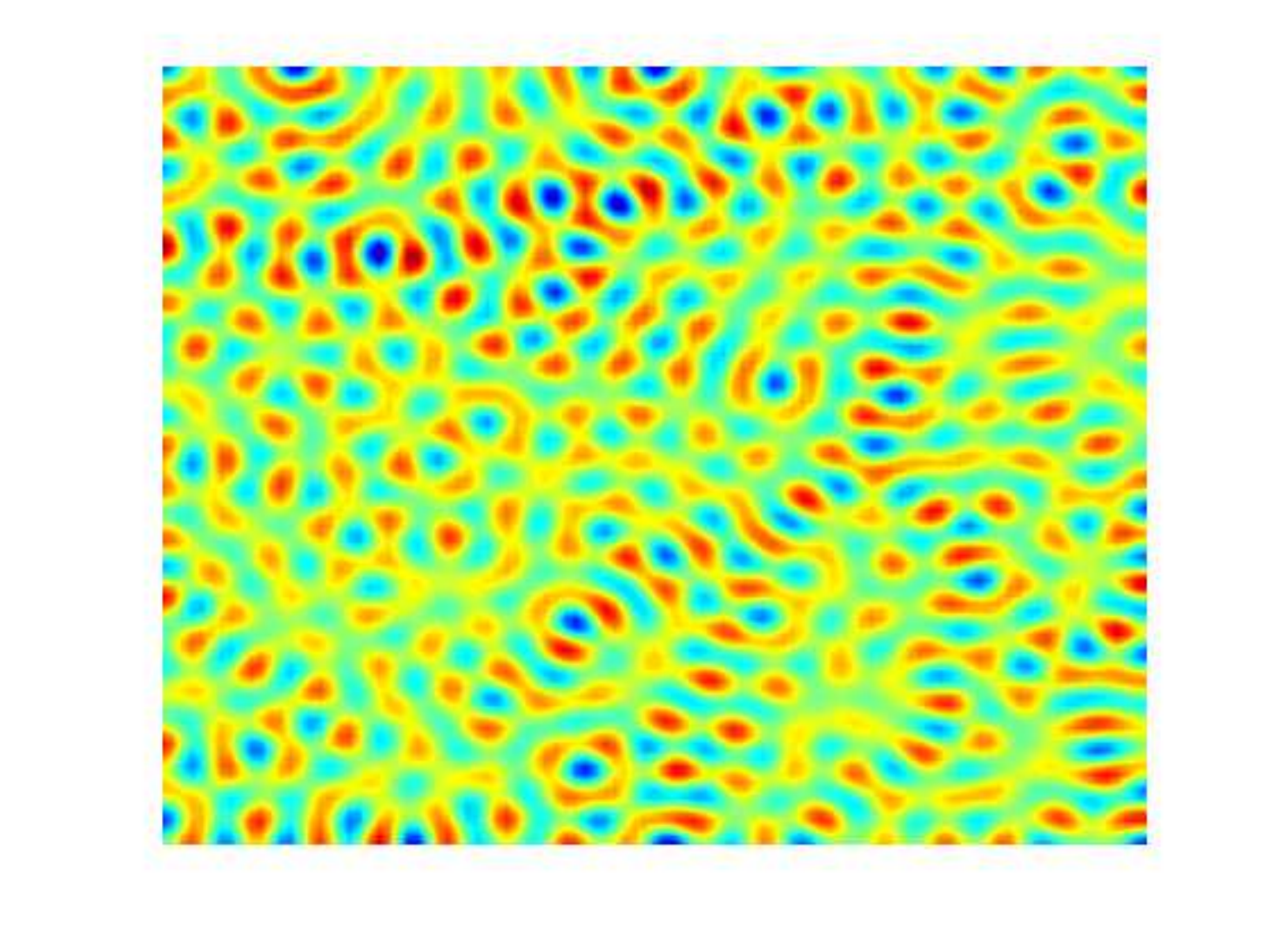}
}
\subfigure[t=200]
{
\includegraphics[width=4cm,height=4cm]{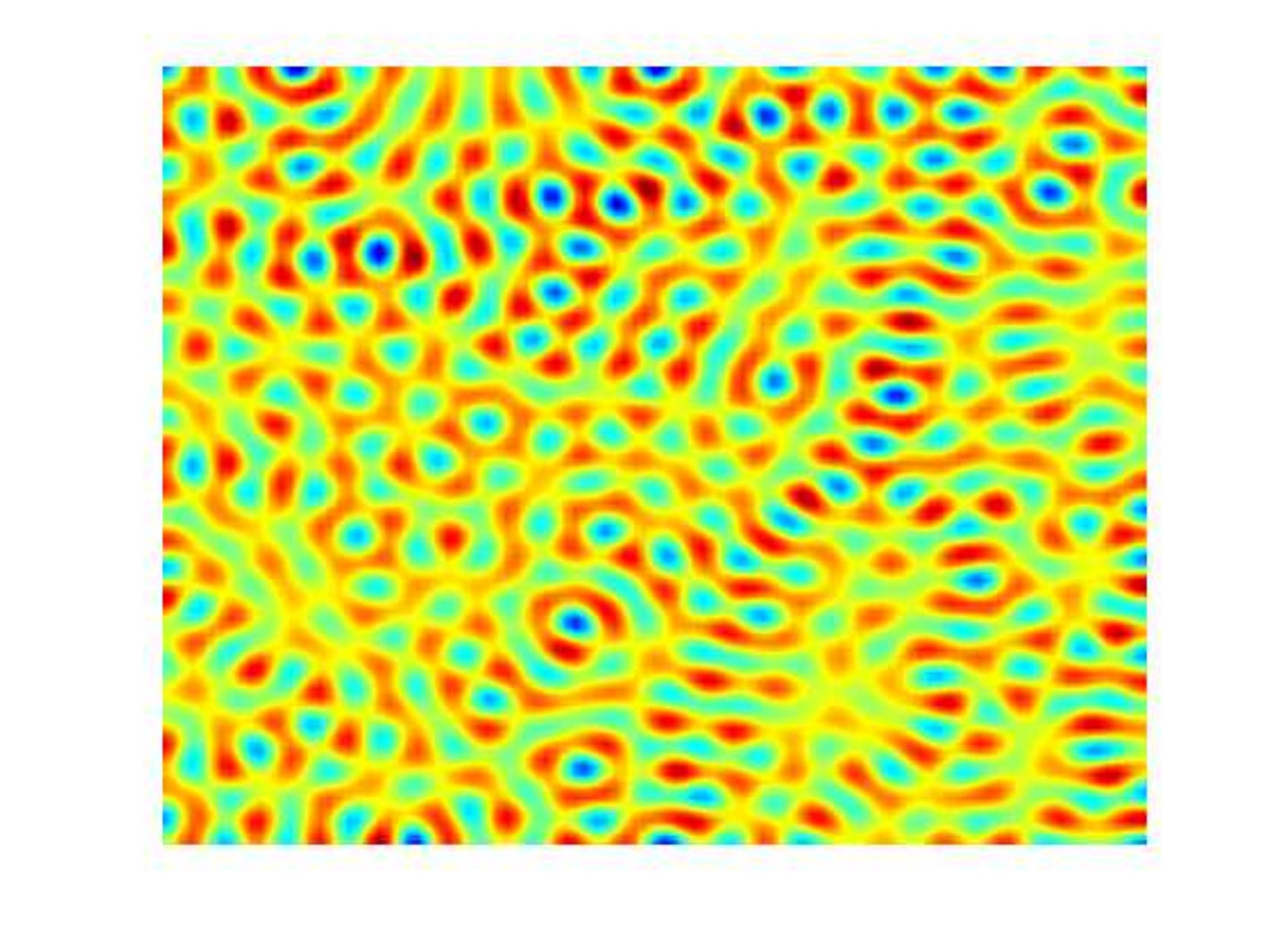}
}
\quad
\subfigure[t=400]{
\includegraphics[width=4cm,height=4cm]{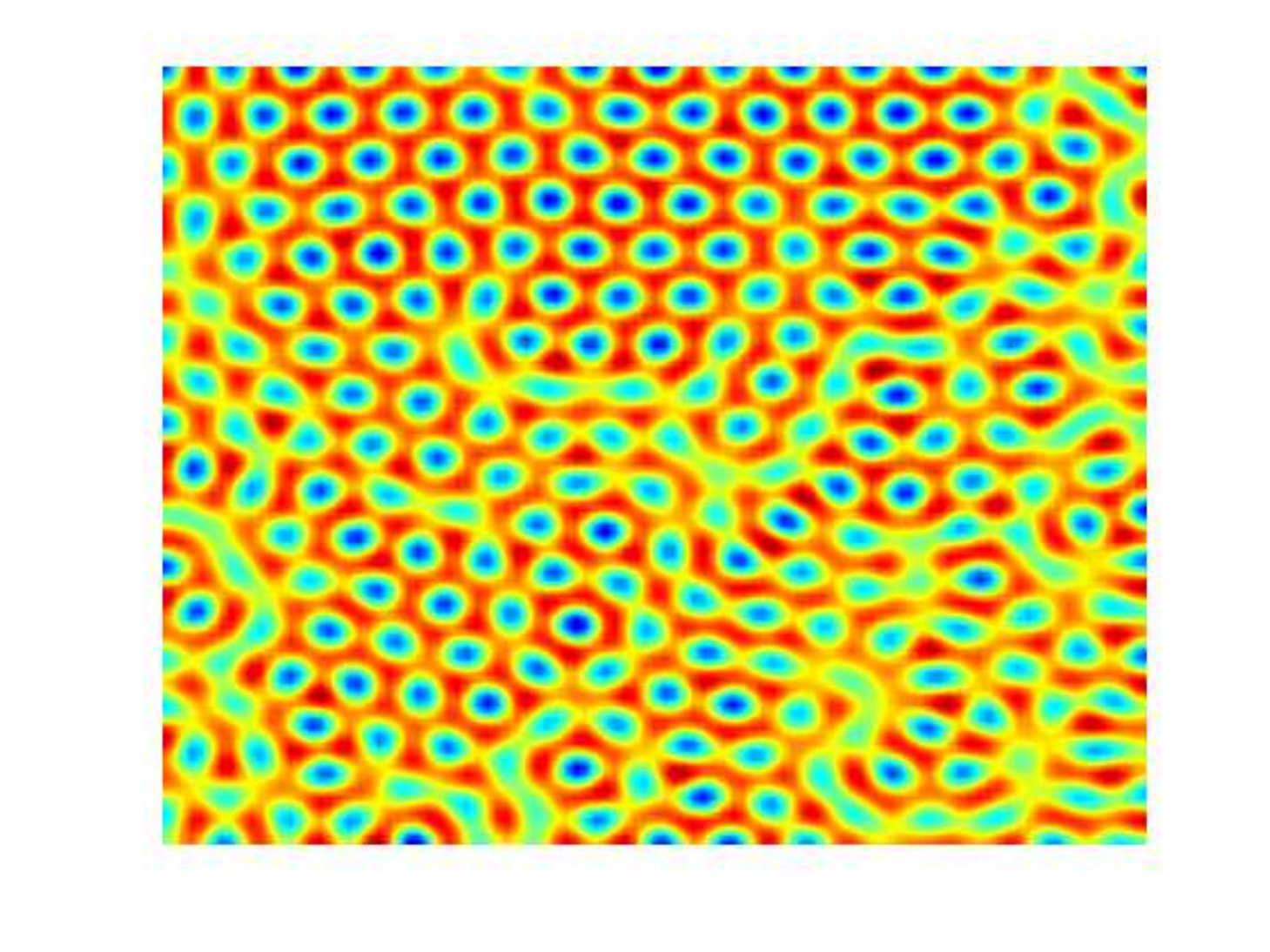}
}
\subfigure[t=800]
{
\includegraphics[width=4cm,height=4cm]{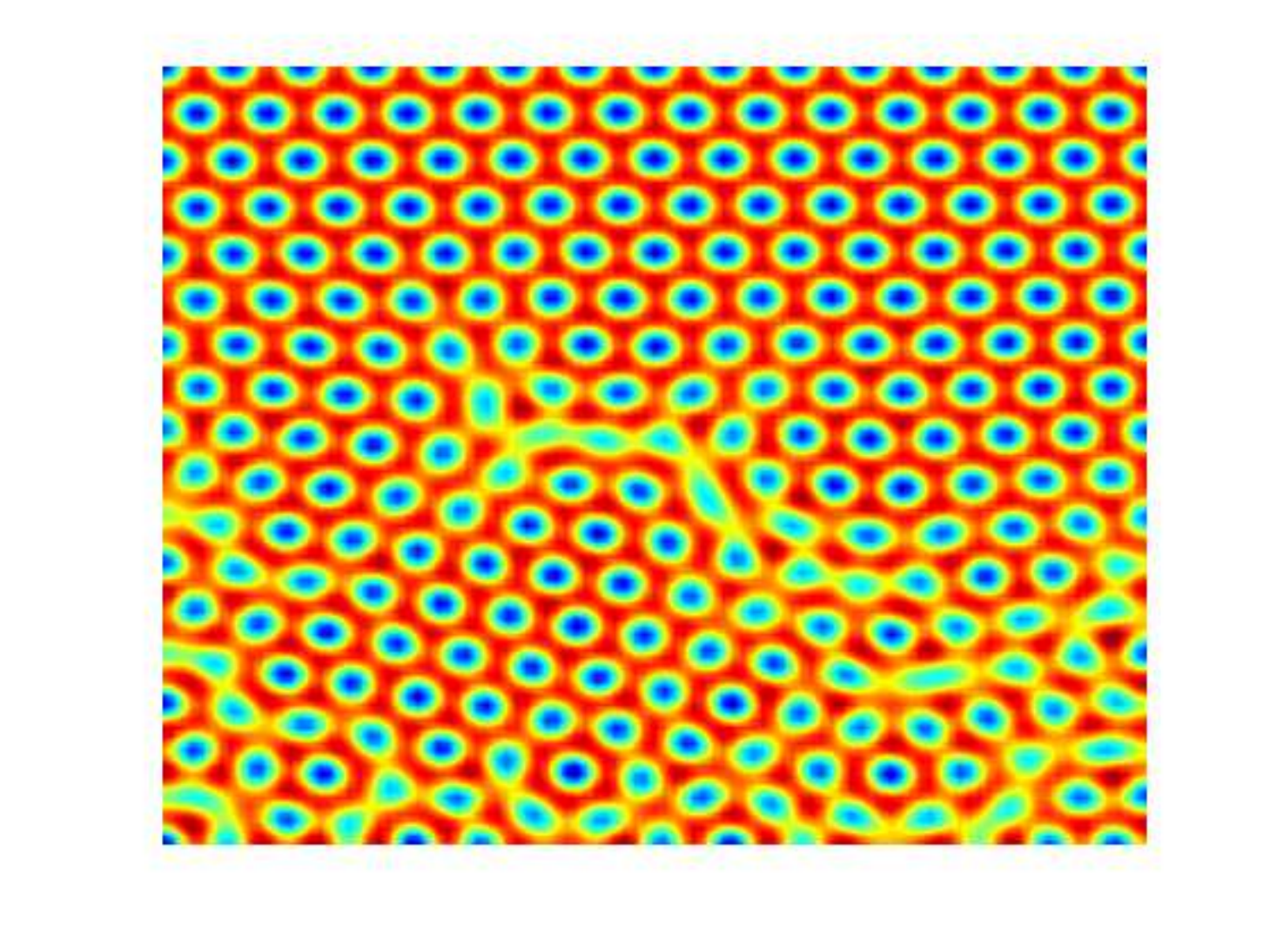}
}
\subfigure[t=2000]
{
\includegraphics[width=4cm,height=4cm]{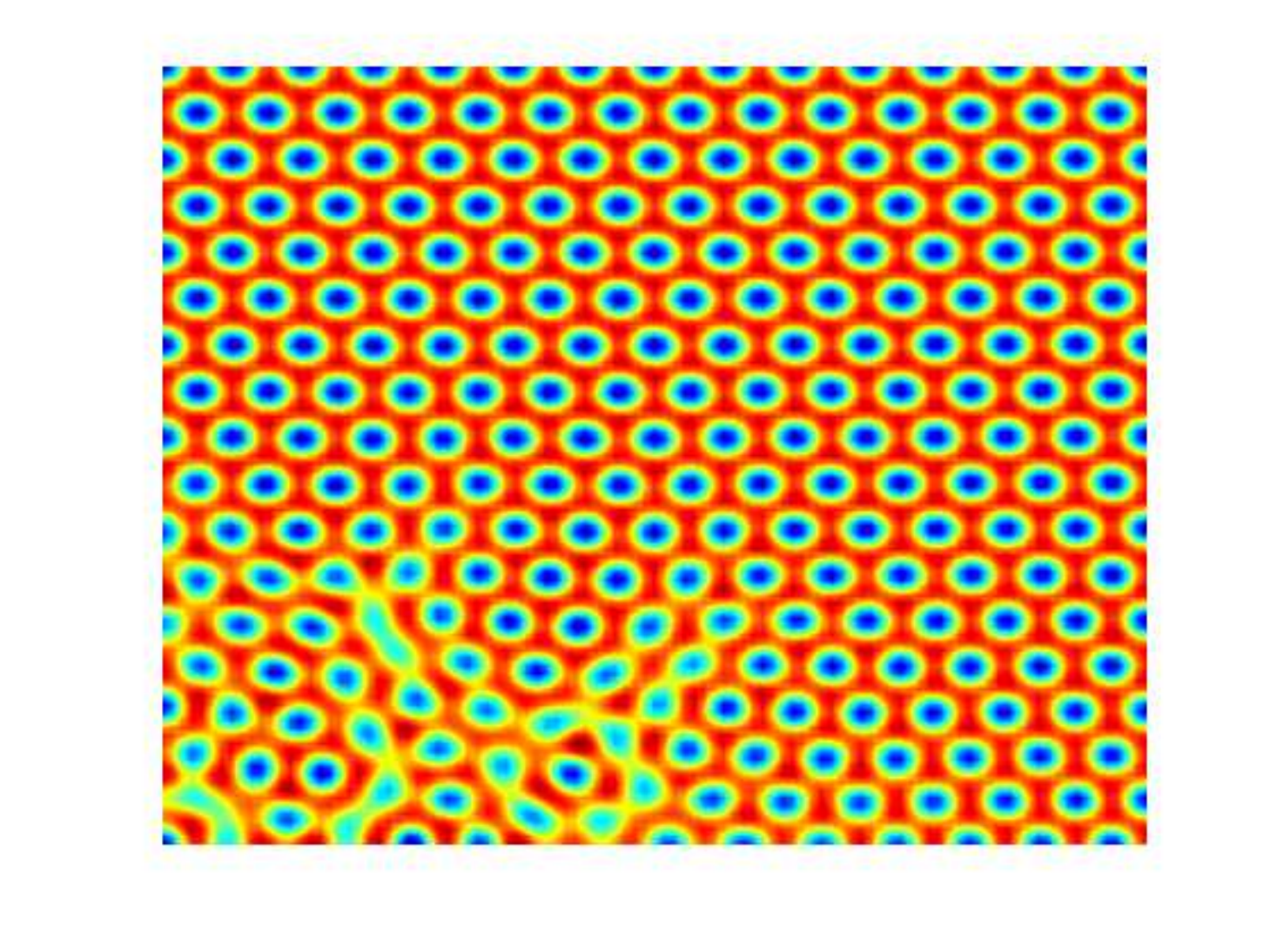}
}
\caption{Snapshots of the phase variable $\phi$ are taken at t=40, 100, 200, 400, 800, 2000 for example 3 with the initial condition (\ref{section5_e2}) for MIEQ scheme.}\label{fig4}
\end{figure}

\begin{figure}[htp]
\centering
\includegraphics[width=10cm,height=7cm]{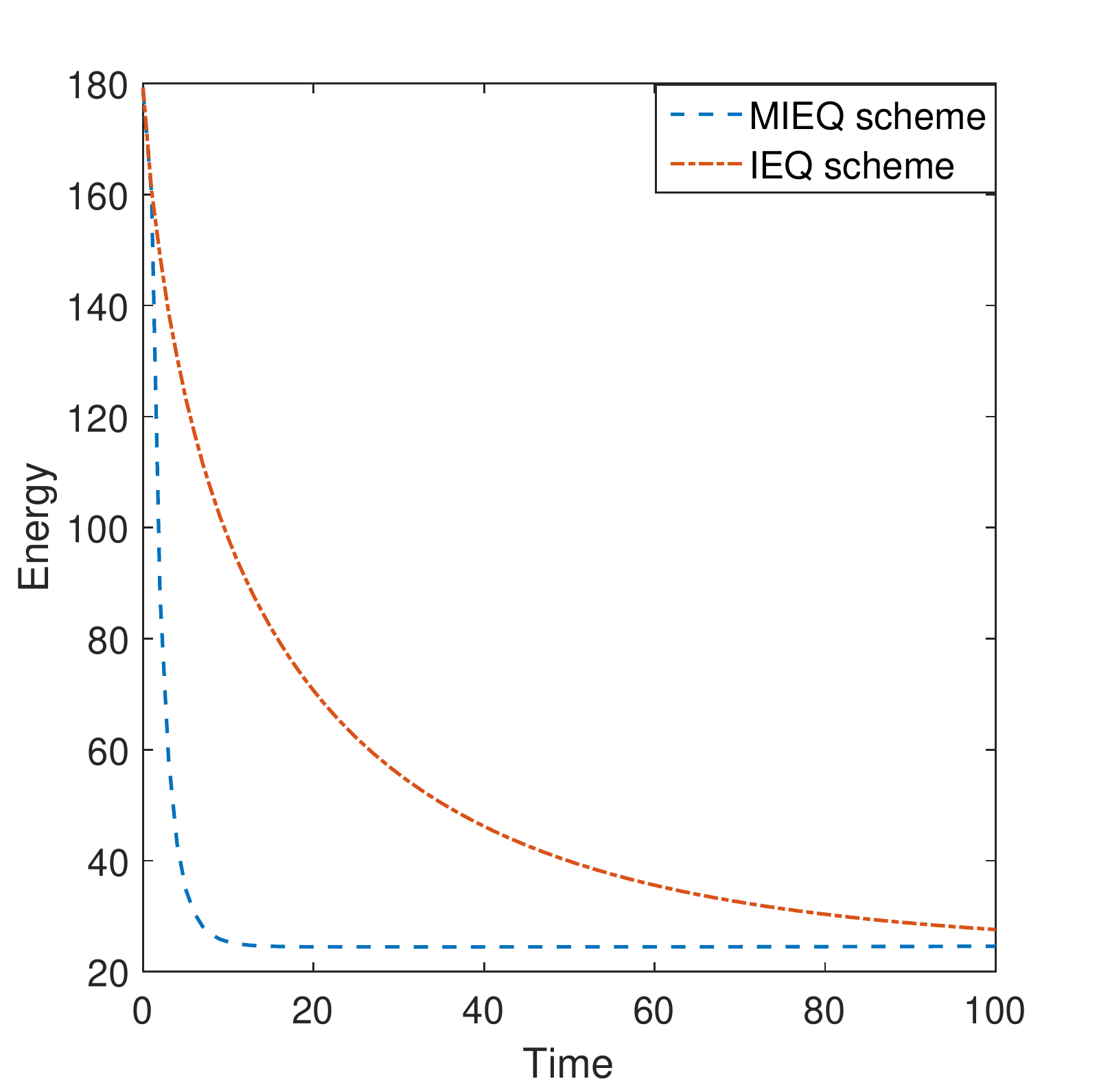}
\caption{Energy evolution for the phase field crystal equation for example 3 using IEQ and MIEQ approaches.}\label{fig5}
\end{figure}

\subsection{The MSAV approach for Swift-Hohenberg model}
In this subsection, we study the Swift-Hohenberg (SH) equation with quadratic-cubic nonlinearity to check the efficiency of MSAV approach.
Given the following free energy functional \cite{lee2019energy}:
\begin{equation}\label{section5_sh1}
\aligned
&E(\phi)=\int_\Omega\frac14\phi^4-\frac g3\phi^3+\frac12\phi\left(-\epsilon+(1+\Delta)^2\right)\phi d\textbf{x}.
\endaligned
\end{equation}
where $\phi$ is the density field and $g\geq0$ and $\epsilon>0$ are constants with physical significance. the SH model can be modeled by $L^2$-gradient flow from the energetic variation of the above energy functional $E(\phi)$:
\begin{equation}\label{section5_sh2}
\aligned
&\frac{\partial \phi}{\partial t}=-(\phi^3-g\phi^2+(-\epsilon+(1+\Delta)^2\phi)),
\endaligned
\end{equation}

It is obvious that $E_1(\phi)=\int_\Omega\frac14\phi^4-\frac g3\phi^3-\frac12\epsilon\phi^2d\textbf{x}$ will be negative in some cases because of $-\int_\Omega\frac g3\phi^3+\frac12\epsilon\phi^2d\textbf{x}$ for $g>0$.

Next, we will give the following example :

\textbf{Example 4}: The initial condition is
\begin{equation}\label{section5_sh3}
\aligned
&\phi_0(x,y)=A+rand(x,y),
\endaligned
\end{equation}
where $A=0.1$, $\Omega=[-50,50]^2$, $rand(x,y)$ is the random number in $[-0.1,0.1]$ with zero mean. The order parameter is $\epsilon=0.025$, $g=2$.

In the process of calculation, we find that if the constant $C<10000$ in square root for SAV approach, $E_1(\phi)+C\geq0$ will not satisfied for some $\phi$. So, we choose $C=10000$ in SAV scheme. However, in MSAV approach, we choose $E_0(\phi)=\int_{\Omega}2\phi^3+4\phi^2d\textbf{x}$ and the parameter $\kappa=0$. In Figure \ref{fig6}, we show the energy evolution for the SH model when using SAV and MSAV approaches. One can see that the MSAV approach is more efficient than SAV approach. Figure \ref{fig7} shows the evolution of $\phi(x,y,t)$ using BDF-MSAV scheme with $\Delta t=0.1$. The similar features to those of SH model can obtain in \cite{lee2019energy}.

\begin{figure}[htp]
\centering
\includegraphics[width=10cm,height=7cm]{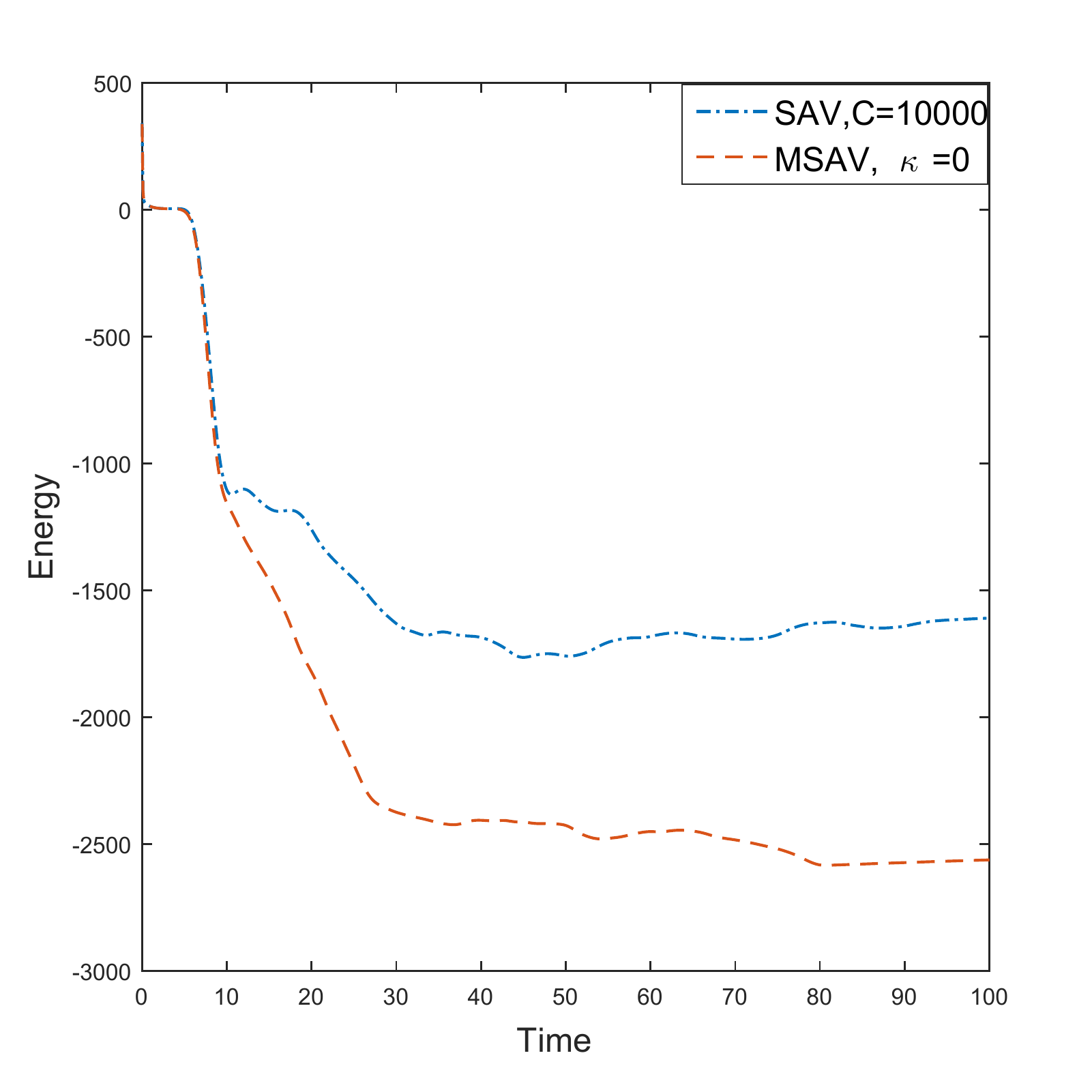}
\caption{Energy evolution for the SH model for example 4 using SAV and MSAV approaches.}\label{fig6}
\end{figure}
\begin{figure}[htp]
\centering
\subfigure[t=2]{
\includegraphics[width=4cm,height=4cm]{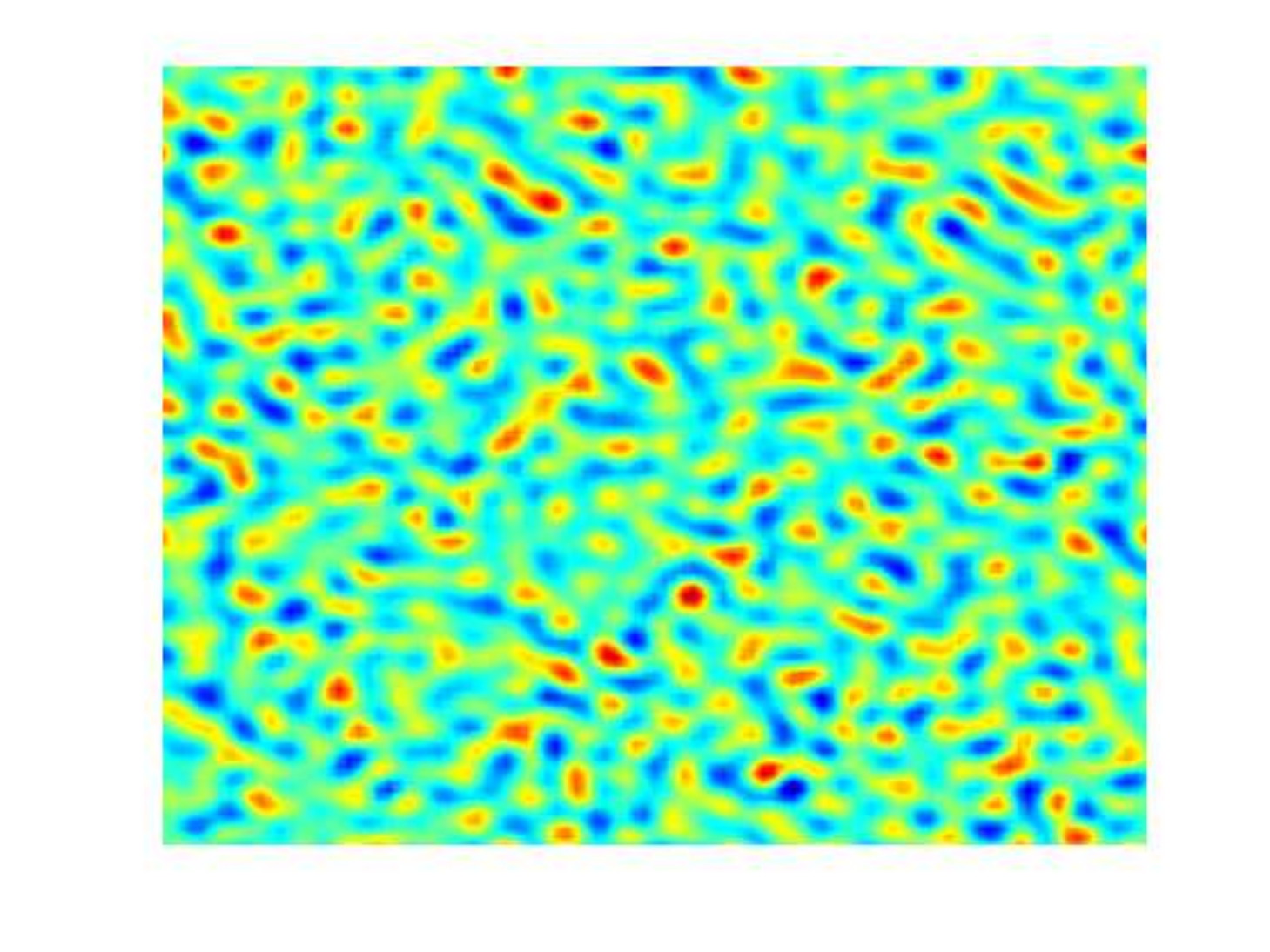}
}
\subfigure[t=8]
{
\includegraphics[width=4cm,height=4cm]{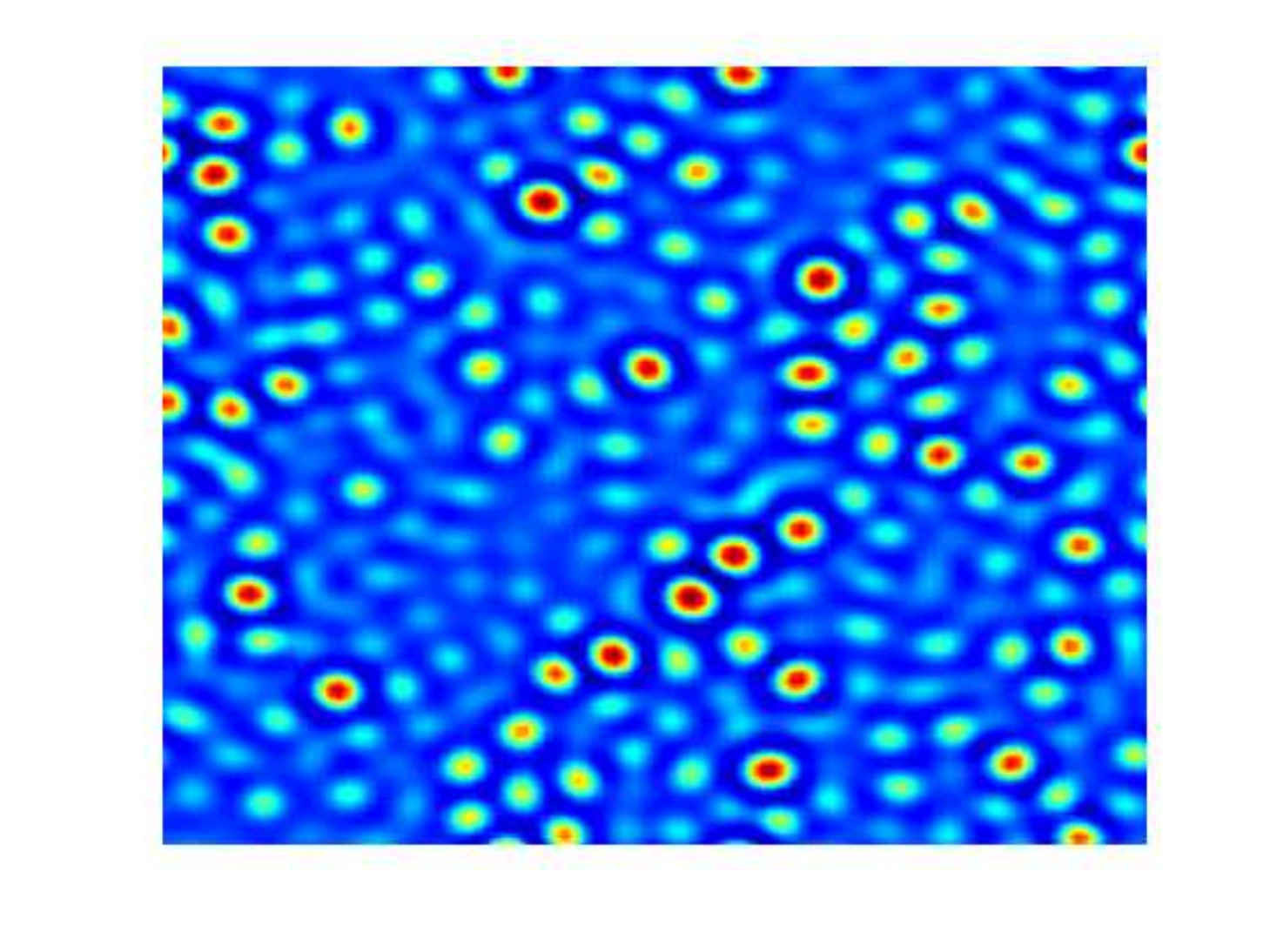}
}
\subfigure[t=10]
{
\includegraphics[width=4cm,height=4cm]{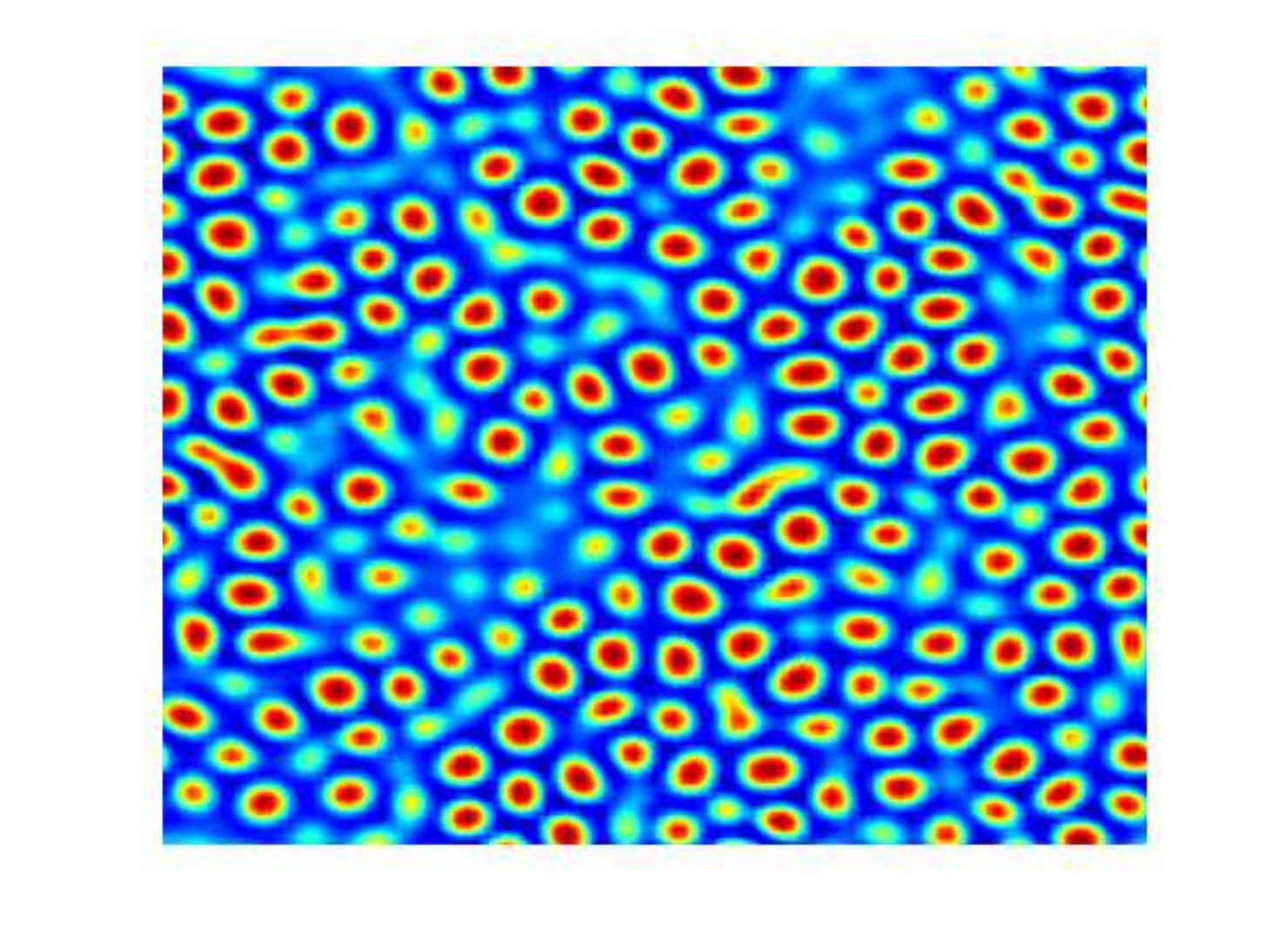}
}
\quad
\subfigure[t=20]{
\includegraphics[width=4cm,height=4cm]{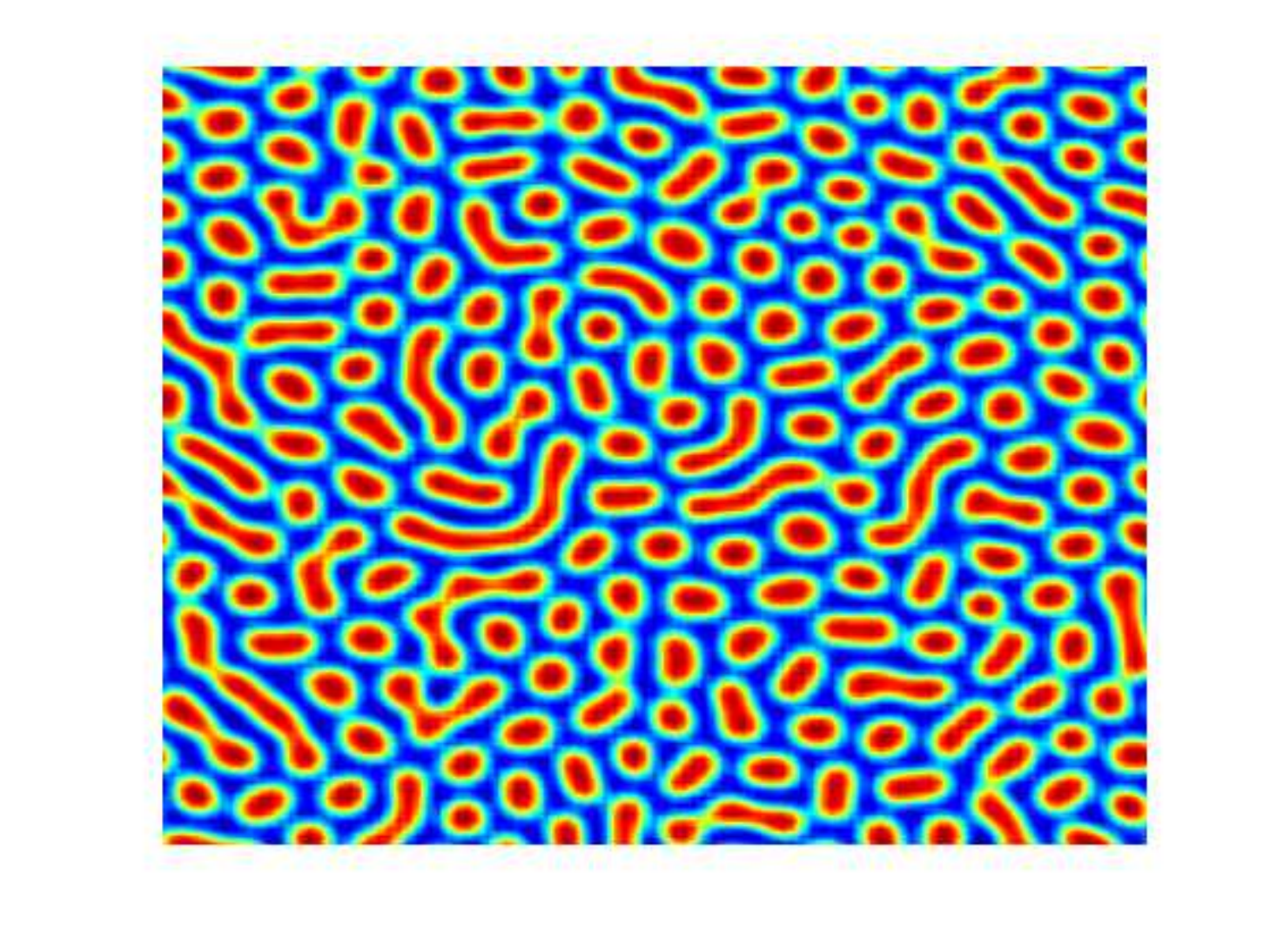}
}
\subfigure[t=40]
{
\includegraphics[width=4cm,height=4cm]{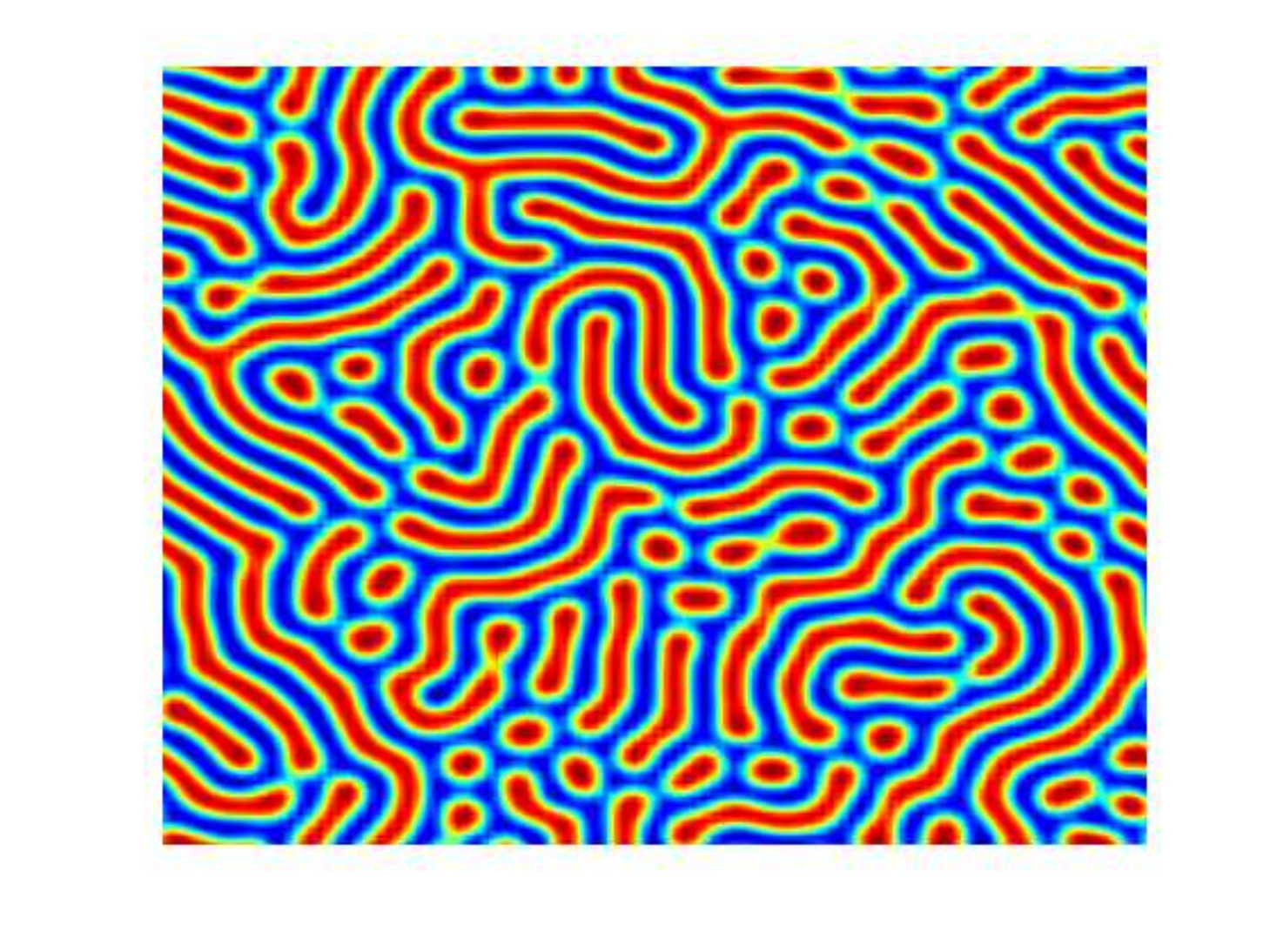}
}
\subfigure[t=100]
{
\includegraphics[width=4cm,height=4cm]{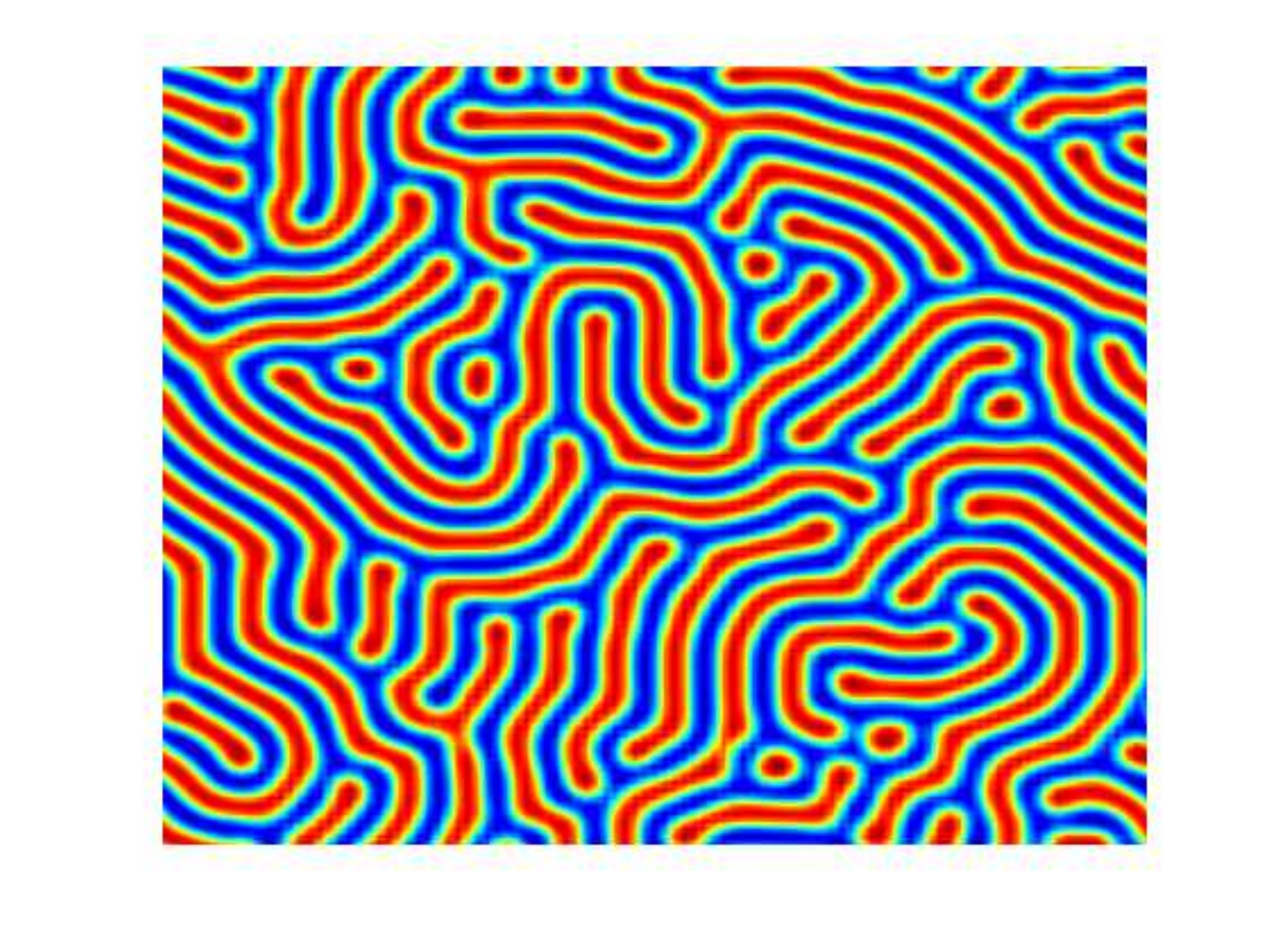}
}
\caption{Snapshots of the phase variable $\phi$ are taken at t=2,8,10,20,40,100 for example 4 with the initial condition (\ref{section5_sh3}) for MSAV scheme.}\label{fig7}
\end{figure}
\section{Conclusion}
In this paper, we construct accurate and efficient procedures for the phase field models and prove the unconditional energy stability for the given semi-discrete schemes carefully and rigorously. A comparative study of IEQ, MIEQ, SAV and MSAV approaches is considered to show the accuracy and efficiency. Finally, we present various 2D numerical simulations to demonstrate the stability and accuracy.
\section*{Acknowledgement}
No potential conflict of interest was reported by the author. We would like to acknowledge the assistance of volunteers in putting together this example manuscript and supplement. The author thanks for the financial support from China Scholarship Council.
\bibliographystyle{siamplain}
\bibliography{MSIEQ-SAV-Gradient-Flow}

\end{document}